\documentclass[12pt,leqno]{amsart}
\usepackage{latexsym,amsmath,amssymb,mathrsfs}
\usepackage{hyperref}
\usepackage{esint}
\usepackage{mathtools}
\DeclarePairedDelimiterX{\norm}[1]{\lVert}{\rVert}{#1}
\usepackage{enumitem}
\usepackage{arydshln}

\usepackage{tikz}

\title[Trudinger inequality]{The Trudinger inequality is true for $M^{1,s}$ spaces} 

\author[R.~Alvarado]{Ryan Alvarado}
\address{Ryan Alvarado: Department of Mathematics, Amherst College, Amherst, MA 01002, USA} 
\email{\tt rjalvarado@amherst.edu}

\author[A.~Dughayshim]{Ahmed Dughayshim}
\address{Ahmed Dughayshim: Department of Mathematics, University of Pittsburgh, 301
Thackeray Hall, Pittsburgh, PA 15260, USA} 
\email{\tt AHA80@pitt.edu}

\author[P.~Haj\l{}asz]{Piotr Haj\l{}asz}
\address{Piotr Haj{\l}asz: Department of Mathematics, University of Pittsburgh, 301
Thackeray Hall, Pittsburgh, PA 15260, USA} 
\email{\tt hajlasz@pitt.edu}
\thanks{P.H. was supported by NSF grant  DMS-2452426.}

plus 6pt minus 12pt
\newtheorem{theorem}{Theorem}
\newtheorem{lemma}[theorem]{Lemma}
\newtheorem{corollary}[theorem]{Corollary}

\newtheorem*{proposition*}{Proposition}

\theoremstyle{definition}
\newtheorem{remark}[theorem]{Remark}

\newcommand{\barint}{
\rule[.036in]{.12in}{.009in}\kern-.16in \displaystyle\int }

\newcommand{\barcal}{\mbox{$ \rule[.036in]{.11in}{.007in}\kern-.128in\int $}}
\newcommand{\bbbz}{\mathbb Z}
\newcommand{\bbbr}{\mathbb R}

\def\diam{\operatorname{diam}}

\def\H{{\mathcal H}}

\def\mvint_#1{\mathchoice
          {\mathop{\vrule width 6pt height 3 pt depth -2.5pt
                  \kern -8pt \intop}\nolimits_{\kern -3pt #1}}%
          {\mathop{\vrule width 5pt height 3 pt depth -2.6pt
                  \kern -6pt \intop}\nolimits_{#1}}%
          {\mathop{\vrule width 5pt height 3 pt depth -2.6pt
                  \kern -6pt \intop}\nolimits_{#1}}%
          {\mathop{\vrule width 5pt height 3 pt depth -2.6pt
                  \kern -6pt \intop}\nolimits_{#1}}}

\numberwithin{theorem}{section} \numberwithin{equation}{section}

\begin{document}
\sloppy

\keywords{Sobolev spaces, Hajłasz-Sobolev spaces, Trudinger inequality, analysis on metric spaces}

\subjclass[2020]{30L15, 46E35, 46E36}

\begin{abstract}
We prove the Trudinger inequality with exponent $\frac{s}{s-1}$ for $M^{1,s}$ Sobolev spaces on metric measure spaces under the sole measure growth assumption $\mu(B(x,r))\ge br^s$. This improves the previously known exponential integrability with power one. Our argument also yields sharp asymptotic bounds for the Sobolev constants as $p\uparrow s$. On doubling spaces, we further remove the connectedness assumption from the Trudinger inequality associated with a $(1,p)$-Poincaré inequality when $p<s$. An Ahlfors regular counterexample shows that this extension fails at the critical exponent $p=s$.
\end{abstract}

\maketitle

\section{Introduction}

To begin, we define a metric measure space $(X,d,\mu )$ as a metric space $(X,d)$ equipped with a Borel measure $\mu$ that satisfies $0<\mu(B(x,r))<\infty$ for any $x\in X$, $r\in(0,\infty)$. Further, we will always assume that $X$ has at least two points.

The study of first order Sobolev spaces on general metric measure spaces was first initiated in \cite{hajlasz2}. In the current literature these spaces are denoted by $M^{1,p}(X)$. For $0<p\leq\infty$, the space $M^{1,p}(X)$ consists of all $u\in L^p(X)$ such that there is $0\leq g\in L^p(X)$ satisfying
\begin{equation}
\label{eq14}
|u(x)-u(y)|\leq d(x,y)(g(x)+g(y))
\qquad 
\text{almost everywhere,}
\end{equation}
i.e., for all $x,y\in X\setminus N$, where $\mu(N)=0$.

The main result of \cite[Theorem~6]{hajlasz2} was the Sobolev embedding theorem under the assumption that the space is bounded and the measure satisfies the lower bound $\mu(B(x,r))\geq br^s$; see \eqref{eq1}. It turned out that the exponent $s$ plays the role of the dimension of the space, leading to different embeddings according to whether $0<p<s$, $p=s$, or $p>s$. The result was later generalized to local estimates on possibly unbounded spaces in \cite[Theorem~8.7]{hajlasz}; see also \cite[Theorem~6]{AGH}. These local estimates also yield H\"older continuity when $p>s$, a conclusion that was not obtained in \cite{hajlasz2}. Thus, roughly speaking, these papers established the following counterparts of the classical Sobolev embeddings for $u\in M^{1,p}$: if $0<p<s$, then $u\in L^{p^*}$, where $p^*=sp/(s-p)$; if $p=s$, then $\exp(Cu)\in L^1$; and if $s<p<\infty$, then $u\in C^{0,1-s/p}$.

After \cite{hajlasz2}, several other classes of Sobolev spaces on metric measure spaces were introduced; see, for example, \cite{ambrosiogs,cheeger,hajlasz,SMP,shanmugalingam}. In these approaches, proofs of embedding theorems typically require the measure to be doubling. In contrast, for $M^{1,p}$ spaces it suffices to assume only the lower bound $\mu(B(x,r))\geq br^s$. For this reason, in this paper we focus primarily on $M^{1,p}$ spaces; see, however, Theorems~\ref{T9} and~\ref{T3}.

The proofs of the embedding theorems from \cite{hajlasz,hajlasz2} were subsequently included in the monographs \cite[Theorem~5.4.2]{AT}, \cite[Theorem~8.6 (Theorem~12.1 in the preprint version)]{DGN}, and \cite[Theorem~8.1.18]{HKST}. The theory was later extended to other classes of function spaces, including Triebel--Lizorkin and Besov spaces; see, for example, \cite{AYY21,AYY24,GKZ,KosYZ,KosYZ2}. Moreover, \cite{hajlasz2} has been cited hundreds of times, with its results being applied, refined, and generalized in a wide variety of settings.

While the embeddings $u\in L^{p^*}$ for $0<p<s$, and $u\in C^{0,1-s/p}$ for $s<p<\infty$ have sharp exponents, the borderline embedding $\exp(C|u|)\in L^1$ for $p=s$ is weaker than the Trudinger inequality available in the classical setting. Thus, one is naturally led to conjecture that if $p=s>1$, then $\exp(C|u|^{s'})\in L^1$, where $s'=s/(s-1)$.

Despite the fact that hundreds of papers have been devoted to various generalizations of the Trudinger inequality, there has been essentially no progress on the question of whether the Trudinger inequality holds for $M^{1,s}$ spaces, with two exceptions that we discuss below.

If additionally the measure is doubling, $p=s>1$, and the space is connected, the Trudinger inequality for $M^{1,p}$ spaces follows from \cite[Theorem~6.1]{SMP}, because $M^{1,p}$ spaces satisfy the Poincar\'e inequality, see Corollary~\ref{T5} below. However, for $M^{1,p}$ spaces we only assume that $\mu(B(x,r))\geq b r^s$, so no doubling or connectedness. Also in the paper \cite[Theorem~1.1 and (6)]{GK} the authors prove the Trudinger inequality for $M^{1,s}$ spaces in $\bbbr^n$ under the assumption that $\mu(B^n(x,r))\geq br^s$, $s>1$, and that the measure vanishes on all hyperplanes perpendicular to coordinate axes. While the doubling condition is not assumed here, the proof is very Euclidean. In particular it uses the nested families of cubes and the Besicovitch covering lemma. 

If $0<s<1$, then formally $s/(s-1)<0$, so one would expect that instead of the Trudinger inequality, in the case $0<p=s\leq 1$ we get continuity of $u$. In fact in \cite{zhou} it was proved that functions in $M^{1,s}$ are continuous whenever $X$ is Ahlfors $s$-regular. Again, the measure is doubling and in fact the space is uniformly perfect which is somewhat related to connectivity, see also \cite{Bondarev,GorkaS}.

The aim of this paper is to prove that in a general case functions in $M^{1,s}$ satisfy the Trudinger inequality. The proof is based on a modification of the original arguments introduced in \cite{hajlasz}. While the original argument couldn't be used to produce the Trudinger inequality, some estimates are much sharper now, due to a new idea that seemed to be overlooked in the literature. This argument gives also a sharp asymptotic of the constant in the Sobolev embedding when $0<p<s$.

Let us denote by $D(u)$ the class of measurable functions $g\geq 0$ satisfying \eqref{eq14}. 
By $f_A=\fint_A f\, d\mu:=\mu(A)^{-1}\int_A f\, d\mu$ we denote the integral average of $f$.

The first main result in the paper is the following improvement of \cite[Theorem~6]{hajlasz2}.
\begin{theorem}
\label{T1}
Let $(X,d)$ be bounded, $0<\mu(X)<\infty$, and assume that for some $s>0$ and $b>0$,
\begin{equation}
\label{eq1}
 \mu(B(x,r))\ge br^s,
 \qquad x\in X,\quad 0<r\leq\diam X.
\end{equation}
Let $u\in M^{1,p}(X)$ and let $g\in D(u)\cap L^p(X)$.
\begin{enumerate}
\item If $0<p<s$ and $p^*=\frac{sp}{s-p}$,
then
\begin{equation}
\label{eq57}
 \inf_{\gamma\in\mathbb R}\Vert u-\gamma\Vert_{p^*}
 \leq C_{s,p}b^{-1/s}\Vert g\Vert_p.
\end{equation}
If $p^*\ge1$, then $u_X$ is defined and one may take $\gamma=u_X$, after changing the constant to $2C_{s,p}$. If $1\leq p<s$, one can take in \eqref{eq57}
\begin{equation}
\label{eq16}
C_{s,p}=64\cdot 2^s\left(\frac{s-1}{s-p}\right)^{1/s'},
\end{equation}
\item If $p=s>1$, then
\begin{equation}
\label{eq17}
\fint_X
 \exp\!\left[
 c_s\left(
 \frac{b^{1/s}|u-u_X|}{\Vert g\Vert_{s}}
 \right)^{\frac{s}{s-1}}
 \right]d\mu
 \leq C_s.
\end{equation}
If $p=s$ and $0<s\leq1$, then
\[
 \Vert u-u_X\Vert_{\infty}
 \leq C_s b^{-1/s}\Vert g\Vert_{s}.
\]

\item If $s<p<\infty$, then
\[
 \Vert u-u_X\Vert_{\infty}
 \leq C_{s,p}b^{-1/s}\mu(X)^{1/s-1/p}\Vert g\Vert_p.
\]
\end{enumerate}
\end{theorem}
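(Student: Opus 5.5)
We argue through level-set truncation combined with a pointwise estimate at a scale adapted to the point; this is the standard route of \cite{hajlasz}, but with a sharper scale choice. The basic tool is the following. If $E\subset X$ has $\mu(E)\ge \mu(X)/2$ and $x$ is a point off the exceptional set, then averaging \eqref{eq14} over $y\in E\cap B(x,r)$ gives
\[
|u(x)-u_{E'}|\le r\bigl(g(x)+g_{E'}\bigr),\qquad E'=E\cap B(x,r).
\]
Choose the radius $r$ so that $\mu(B(x,r))$ is comparable to $\mu(X)$ and the ball meets a median-type level set in a set of comparable measure. Iterating the estimate along a sequence of radii $r_k=2^{-k}r_0$ gives a telescoping bound:
\[
|u(x)-m|\lesssim \sum_k r_k\Bigl(\fint_{B(x,r_k)}g^p\Bigr)^{1/p}\le \sum_k r_k\,\bigl(b r_k^s\bigr)^{-1/p}\Bigl(\int_{B(x,r_k)}g^p\Bigr)^{1/p}.
\]
Here $m$ is a median of $u$. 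The lower bound \eqref{eq1} is the only place the measure enters.

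The new idea, replacing the crude bound $\int_{B}g^p\le\|g\|_p^p$, is to split the sum at a scale $\rho(x)$ determined by the maximal-type quantity $g(x)$ (or $Mg$ restricted to small balls). Below $\rho$ we use $r\,g(x)$-type terms summing to $\rho\, g(x)$. Above $\rho$ we use $r^{1-s/p}b^{-1/p}\|g\|_p$, which for $p<s$ sums to $\frac{C}{s-p}\,\rho^{1-s/p}b^{-1/p}\|g\|_p$. Optimizing $\rho$ gives the Hedberg-type bound
\[
|u(x)-m|\le C\,b^{-1/s}\,\|g\|_p^{p/s}\,g(x)^{1-p/s}\Bigl(\tfrac{1}{s-p}\Bigr)^{1/s'}.
\]
Raising this to the power $p^*$ and integrating yields \eqref{eq57} directly, with no maximal function needed, since $g$ appears pointwise. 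The constant \eqref{eq16} comes from tracking the geometric sums; the exponent $1/s'$ on $(s-p)^{-1}$ is exactly what the optimization produces. To avoid maximal functions (we have no doubling), we apply the pointwise inequality \eqref{eq14} only at $x$ and at points $y$ in level sets. The term $g(y)$ is controlled by Chebyshev: we choose $E$ as a set where $g\le t$, with $\mu(\{g>t\})\le\|g\|_p^p t^{-p}$ small relative to $b r^s$. When $p<1$ we use $\inf_\gamma$ and the $p$-triangle inequality, and for $p^*\ge1$ we replace the median by $u_X$ at the cost of a factor $2$.

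For $p=s>1$ we run the same scheme, truncating at level sets $u_j=\min(\max(u-m-2^j\lambda,0),2^j\lambda)$. Each truncation is Lipschitz-compatible, with $g\chi_{\{2^j\lambda<u-m\le 2^{j+1}\lambda\}}\in D(u_j)$. Now the sum over $r$ becomes logarithmic: splitting at $\rho$ gives a bound
\[
|u(x)-m|\lesssim b^{-1/s}\|g\|_s\bigl(\log(R/\rho)\bigr)^{1/s'}+\rho\,g(x).
\]
Here Hölder in the scale sum produces exactly $\log^{1/s'}$, which is the key improvement over the old $\log^1$. Inverting this to obtain a distribution estimate $\mu(\{|u-m|>t\})\le C\mu(X)\exp(-c(t b^{1/s}/\|g\|_s)^{s'})$ gives \eqref{eq17}. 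The main obstacle is precisely this step: getting the $1/s'$ power requires applying Hölder across dyadic scales with disjoint annular pieces of $g$, which needs $\sum_k\int_{\text{annulus}_k}g^s\le\|g\|_s^s$. Without doubling the annuli around $x$ are still disjoint, so I would try that first, combined with a Chebyshev choice of $\rho$ at each level.

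For $s\le1$ and for $p>s$, the scale sum converges at $0$. We take $\rho\to0$, and the same telescoping gives the sup bound; the factor $\mu(X)^{1/s-1/p}$ comes from starting at $r_0$ with $br_0^s\approx\mu(X)$.
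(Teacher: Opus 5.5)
There is a genuine gap, and it sits exactly at the step the theorem is about. In the critical case your telescoping gives $|u(x)-m|\lesssim\rho g(x)+\sum_{\rho\le r_k\le r_0}r_k\fint_{B(x,r_k)}g\,d\mu$. Bounding each average by H\"older and $\mu(B(x,r_k))\ge br_k^s$ gives terms $b^{-1/s}\Vert g\Vert_{L^s(B(x,r_k))}$. These balls are nested, and each may carry essentially all of $\Vert g\Vert_s$. So H\"older in $k$ over the $N\approx\log_2(r_0/\rho)$ scales yields only $N^{1/s'}\bigl(\sum_{k\le N}\int_{B(x,r_k)}g^s\,d\mu\bigr)^{1/s}\le N\Vert g\Vert_s$. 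That is a $\log^1$ bound and $\exp(C|u|)$, which is the old result.

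Disjoint annuli do not appear in this sum by themselves. You would have to keep the averages and exchange summations, $\sum_k r_k\fint_{B(x,r_k)}g\,d\mu=\sum_j\int_{A_j}g\,d\mu\sum_{k\le j}r_k/\mu(B(x,r_k))$, with $A_j=B(x,r_j)\setminus B(x,r_{j+1})$ and the innermost ball as a last piece. You would then need to prove, without doubling, that $\mu(A_j)^{1/s'}\sum_{k\le j}r_k/\mu(B(x,r_k))\le C_sb^{-1/s}$. That estimate is true: split at $r_k\approx(\mu(B(x,r_j))/b)^{1/s}$ and use $\mu(B(x,r_k))\ge\max\{br_k^s,\mu(B(x,r_j))\}$ for $k\le j$. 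With it your route does give \eqref{eq17}. But it is the missing idea, and your proposal leaves it at ``I would try that first''.

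The same omission breaks your subcritical constant. With the crude bound above $\rho$, the geometric factor is of order $1/(s-p)$. Minimizing $\rho g(x)+\frac{C}{s-p}\rho^{1-s/p}b^{-1/p}\Vert g\Vert_p$ over $\rho$ then gives a constant of order $(s-p)^{-1}$ as $p\uparrow s$, not $(s-p)^{-1/s'}$, so \eqref{eq16} does not follow. Also, your truncation claim is false for Haj\l asz gradients. If $u(x)-m\le 2^j\lambda$ and $u(y)-m>2^{j+1}\lambda$, then $|u_j(x)-u_j(y)|=2^j\lambda$, while your proposed gradient vanishes at both points. Fortunately you never use it.

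Second, an averaging framework cannot cover the full range of the statement. Averages of $g$ over balls require $g\in L^1_{\rm loc}$. For $p<1$ (allowed in part (1), in part (2) when $s<1$, and in part (3) when $s<p<1$), $g\in L^p$ need not be locally integrable, and $(\fint g^p\,d\mu)^{1/p}$ does not dominate $\fint g\,d\mu$. The $p$-triangle inequality does not repair this. For $p=s\le1$, your claim that the scale sum converges at $0$ is false: each scale contributes $r\,(br^s)^{-1/s}\Vert g\Vert_s=b^{-1/s}\Vert g\Vert_s$, independently of $r$.

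The paper avoids averages altogether and turns your side remark about level sets of $g$ into the main engine. Set $E_k=\{g\le 2^k\mu(X)^{-1/p}\Vert g\Vert_p\}$ and $m_k=\mu(X\setminus E_k)$. Then $u$ is Lipschitz on $E_k$, and by \eqref{eq1} every point of $E_k$ lies within $(2m_{k-1}/b)^{1/s}$ of $E_{k-1}$. Hence $a_k=\sup_{E_k}|u-u(x_o)|$, for a fixed base point $x_o$, satisfies $a_k\le a_{k-1}+Cb^{-1/s}\mu(X)^{-1/p}\Vert g\Vert_p2^km_{k-1}^{1/s}$. The layer-cake bound $\sum_k2^{kp}m_{k-1}\le C\mu(X)$, valid for every $p>0$, plays the role of your disjoint annuli. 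H\"older in $k$ (for $s>1$) or $\ell^s\subset\ell^1$ (for $s\le1$) then gives $a_k\lesssim k^{1/s'}$ at $p=s>1$, the sharp constant \eqref{eq16}, and the $L^\infty$ bound at $p=s\le1$. None of this needs any integrability of $g$ beyond $L^p$.
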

%\begin{remark}
%The proof differs from the original proof of Theorem~6 only at the point where the increments in \eqref{eq5} below are summed. Instead of estimating each level separately by Chebyshev's inequality, we retain the stronger summability estimate \eqref{eq8} and use elementary embeddings between the sequence spaces $\ell^r$. In the critical case $s>1$ this is the finite-dimensional estimate
%\[
% \Vert (a_j)_{j=1}^N\Vert_{\ell^1}
% \leq N^{1/s'}\Vert (a_j)_{j=1}^N\Vert_{\ell^s},
%\]
%while for $0<s\leq1$ one uses $\ell^s\subset\ell^1$. A weighted version of the same argument gives the subcritical embedding for the full range $0<p<s$; no step requires $p>1$. This is the only essential modification, and in the critical case it gives the sharp Trudinger exponent $s'=s/(s-1)$.
%\end{remark}
\begin{remark}
One can prove the Trudinger inequality \eqref{eq17}  directly using the sharp asymptotic \eqref{eq16}. However, our proof is different.
\end{remark}
\begin{remark}
In \eqref{eq17}, it is assumed that $\Vert g\Vert_s>0$. A similar assumption is implicit in the other statements of Trudinger-type inequalities appearing in the paper.
\end{remark}

The above result is the global embedding theorem for bounded spaces. The next result is a local version of the Sobolev embedding theorem which is a counterpart of \cite[Theorem~8.7]{hajlasz} and \cite[Theorem~6]{AGH}. 

For a real-valued function $v$ on a set $A$ we write
\[
 \operatorname{osc}_A v=\sup_Av-\inf_Av.
\]
Also, if $\sigma\geq 1$, by $\sigma B$ we denote a ball concentric with $B$ and with $\sigma$ times the radius.
\begin{theorem}
\label{T2}
Let $0<p<\infty$, $1<\sigma\leq2$, and let $B=B(z,R)$ be any ball of radius $R$. Assume that for some $s,b,R_o>0$,
\begin{equation}
\label{eq20}
 \mu(B(x,r))\geq br^s,
 \qquad x\in\sigma B,\quad 0<r\leq R_o.
\end{equation}
Set
\[
 L=\min\{R_o,(\sigma-1)R\}.
\]
Let $u\in M^{1,p}(\sigma B)$ and let $g\in D(u)\cap L^p(\sigma B)$.
\begin{enumerate}
\item If $0<p<s$ and $p^*=sp/(s-p)$, then
\[
 \inf_{\gamma\in\bbbr}
 \left(\fint_B|u-\gamma|^{p^*}\,d\mu\right)^{1/p^*}
 \leq C_{s,p}\frac{R}{L}\,b^{-1/p}L^{1-s/p}
 \Vert g\Vert_{L^p(\sigma B)}.
\]
If $p^*\geq1$, then one may take $\gamma=u_B$, after changing the constant. If $1\leq p<s$, then, with $s'=s/(s-1)$, we get the sharper estimate
\begin{equation}
\label{eq54}
 \left(\fint_B|u-u_B|^{p^*}\,d\mu\right)^{1/p^*}
 \leq C_s\left(
 \frac{R}{L}+\left(\frac{s-1}{s-p}\right)^{1/s'}
 \right)b^{-1/p}L^{1-s/p}
 \Vert g\Vert_{L^p(\sigma B)}.
\end{equation}

\item If $p=s>1$, then
\[
 \fint_B
 \exp\!\left[
 c_s\left(
 \frac{L}{R}\frac{b^{1/s}|u-u_B|}
 {\Vert g\Vert_{L^s(\sigma B)}}
 \right)^{\frac{s}{s-1}}
 \right]d\mu
 \leq C_s.
\]
If $p=s\leq1$, then $u|_B$ has a  uniformly continuous representative that satisfies
\begin{equation}
\label{eq osc}
 \operatorname{osc}_B u
 \leq C_s\frac{R}{L}\,b^{-1/s}
 \Vert g\Vert_{L^s(\sigma B)}.
\end{equation}

\item If $p>s$, then $u|_B$ has a H\"older continuous representative of order $1-s/p$ that satisfies
\[
 \operatorname{osc}_B u
 \leq C_{s,p}\frac{R}{L}\,b^{-1/p}L^{1-s/p}
 \Vert g\Vert_{L^p(\sigma B)},
\]
and
\[
 |u(x)-u(y)|
 \leq C_{s,p}\frac{R}{L}\,b^{-1/p}d(x,y)^{1-s/p}
 \Vert g\Vert_{L^p(\sigma B)}
\]
for all $x,y\in B$.
\end{enumerate}
\end{theorem}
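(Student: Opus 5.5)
The plan is to rerun the chain/telescoping argument behind Theorem~\ref{T1}, now localized to $\sigma B$, with the scale $L$ playing the role of $\diam X$. Fix a point $x\in B$ that is a density point (Lebesgue point where needed). Consider the balls $B_j=B(x,2^{-j}L)$ for $j\ge0$. Each $B_j\subset\sigma B$ because $L\le(\sigma-1)R$, and each radius is $\le R_o$, so \eqref{eq20} gives $\mu(B_j)\ge b(2^{-j}L)^s$.

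\textbf{Step 1: a pointwise increment estimate.} By \eqref{eq14}, for $y\in B_{j+1}$ and $y'\in B_j$,
\[
 |u(y)-u(y')|\le 2^{1-j}L\,(g(y)+g(y')).
\]
For $p\ge1$ we use averages. For $p<1$ we instead use the median-type quantities $m_j$ chosen so that $\mu(\{|u-m_j|>t\}\cap B_j)$ is controlled, as in \cite{hajlasz}. This yields
\[
 |u_{B_{j+1}}-u_{B_j}|\lesssim 2^{-j}L\Big(\fint_{B_j}g^p\Big)^{1/p}
 \le C\,2^{-j}L\,b^{-1/p}(2^{-j}L)^{-s/p}\,a_j,
\]
where
\[
 a_j=\|g\|_{L^p(B_j)},\qquad \sum_j a_j^p\ \text{comparable to}\ \int_{\sigma B} g^p\cdot(\text{overlap}).
\]
The new idea mentioned in the introduction is to keep the $\ell^p$ summability of the sequence $(a_j)$ instead of bounding each $a_j$ separately. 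Since the balls shrink, the cleaner device is to work with the annuli $A_j=B_j\setminus B_{j+1}$. Pointwise one has
\[
 g(x)\le\dots\quad\text{or}\quad a_j^p=\int_{A_j}g^p,
\]
with $\sum_j\int_{A_j}g^p\le\|g\|^p_{L^p(\sigma B)}$. To make this work, I would compare $u$ on $B_{j+1}$ with $u$ on $A_j$, using \eqref{eq14} across points at distance $\lesssim 2^{-j}L$. Doing so requires $\mu(A_j)\gtrsim(2^{-j}L)^s$, which can fail. The fix I would try first is to use the hypothesis at points $y\in A_j$ near the boundary, or else to pass to the truncated maximal-type functional $\sup_j\cdots$ as in \cite{hajlasz}. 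I expect this to be the main obstacle: ensuring the increments are controlled by disjointly supported pieces of $g$ without doubling.

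\textbf{Step 2: summation with sequence-space embeddings.} Telescoping from scale $L$ down to the point gives
\[
 |u(x)-u_{B(x,L)}|\le C\,b^{-1/p}L^{1-s/p}\sum_j 2^{-j(1-s/p)}a_j .
\]
\begin{itemize}
\item If $p>s$, the weights are summable. This gives the oscillation bound, and applying it at scale $d(x,y)$ gives Hölder continuity.
\item If $p=s\le1$, the embedding $\ell^s\subset\ell^1$ gives the oscillation bound \eqref{eq osc}, together with uniform continuity from the tails of the sum.
\item If $p=s>1$, truncate the sum at $N$ terms and use $\|a\|_{\ell^1_N}\le N^{1/s'}\|a\|_{\ell^s}$. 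The tail beyond $N$ is handled by a level-set (Chebyshev/maximal) estimate of size $\mu\lesssim 2^{-Ns}\cdots$. Optimizing $N$ gives
\[
 \mu(\{x\in B:|u-u_{B(x,L)}|>t\})\le C\mu(B)\exp\!\big(-c(t\,b^{1/s}/\|g\|_s)^{s'}\big),
\]
which integrates to the exponential bound.
\item If $p<s$, a weighted Hölder argument gives the weak-$L^{p^*}$ bound, and the usual truncation trick upgrades it to strong $L^{p^*}$. Keeping track of $\sum_j 2^{-j(1-s/p)\cdot}$-type geometric sums gives the constant $\big(\frac{s-1}{s-p}\big)^{1/s'}$ in \eqref{eq54} when $p\ge1$.
\end{itemize}

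\textbf{Step 3: from local centers to $u_B$.} Finally, we must compare $u_{B(x,L)}$ with $u_B$ (or with $\gamma$). The number of balls of radius $L$ needed to chain across $B$ is of order $R/L$. Each step contributes at most $C\,b^{-1/p}L^{1-s/p}\|g\|_{L^p(\sigma B)}$, by the same two-point estimate applied to overlapping balls of radius $\sim L$. This yields the factor $R/L$ in every statement (inside the exponential in part (2)). When $p^*<1$, the mean is not available, and a median constant $\gamma$ is used instead.
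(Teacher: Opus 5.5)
There is a genuine gap, and it is the step you flagged as ``the main obstacle''. With nested balls $B_j=B(x,2^{-j}L)$, your increments are controlled by $a_j=\Vert g\Vert_{L^p(B_j)}$, and these pieces overlap. If $g$ is supported in $B_N$, then $a_j=\Vert g\Vert_{L^s(\sigma B)}$ for every $j\le N$. So $\Vert(a_j)_{j\le N}\Vert_{\ell^s}\sim N^{1/s}\Vert g\Vert_s$, and H\"older only gives $\sum_{j\le N}a_j\lesssim N\Vert g\Vert_s$. The consequences are:
\begin{enumerate}
\item for $p=s>1$ you recover only the old $\exp(C|u|)$ bound, not the exponent $s'$;
\item for $p=s\le1$ the inclusion $\ell^s\subset\ell^1$ yields no uniform bound;
\item for $p<s$ the geometric sums give a constant of order $(s-p)^{-1}$, not $\bigl(\frac{s-1}{s-p}\bigr)^{1/s'}$.
\end{enumerate}
Passing to annuli does not rescue this under a mere lower mass bound. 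The identity $u_{B_j}-u_{B_{j+1}}=\frac{\mu(A_j)}{\mu(B_j)}(u_{A_j}-u_{B_{j+1}})$ still involves $g$ on the inner ball $B_{j+1}$. Redistributing $\int_{B_{j+1}}g$ onto the annuli $A_i$, $i>j$, with the correct factor $2^{-i}$ requires comparing $\mu(A_i)$ or $\mu(B_i)$ with $(2^{-i}L)^s$ from above. The maximal-function alternative needs a maximal theorem. Both require doubling or an upper mass bound, neither of which is assumed (the measure may even have atoms). So Step~2 is unproved in exactly the cases that are new.

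The missing idea is to decompose according to the values of $g$ rather than around a point. Set $E_k=\{x\in\sigma B\setminus N:\ g(x)\le 2^k\mu(B)^{-1/p}\Vert g\Vert_{L^p(\sigma B)}\}$ and $m_k=\mu(\sigma B\setminus E_k)$. On $E_k$, $u$ is Lipschitz with constant $2^{k+1}\mu(B)^{-1/p}\Vert g\Vert_{L^p(\sigma B)}$. The lower bound alone shows that each point of $E_k$ lies within $r_k=(2m_{k-1}/b)^{1/s}$ of $E_{k-1}$, because $B(x,r_k)$ has measure at least $2m_{k-1}$. Chaining down the levels gives
$\sup_{E_k\cap B}|u-\gamma|\le a_{k_o}+Cb^{-1/s}\mu(B)^{-1/p}\Vert g\Vert_{L^p(\sigma B)}\sum_{j=k_o+1}^k2^jm_{j-1}^{1/s}$.
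The disjointness you were looking for is now automatic, since the layer-cake formula gives $\sum_j2^{jp}m_{j-1}\le C_p\mu(B)$. For $p=s>1$, H\"older yields growth $k^{1/s'}$ on $E_k\setminus E_{k-1}$, a set of measure at most $2^{-s(k-1)}\mu(B)$; this is precisely $\exp(c|u|^{s'})$-integrability. The same bookkeeping handles $p=s\le1$ and gives the sharp constant for $p<s$. Localization comes from choosing $k_o$ with $\sum_{j>k_o}r_j\le L$. This keeps every chain inside $\sigma B$ and produces the factor $R/L$ through the Lipschitz bound on $E_{k_o}$.

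Separately, your Step~3 chain of roughly $R/L$ overlapping balls is not available without doubling or connectedness. It is also unnecessary: averages over two balls of radius $L$ inside $\sigma B$ can be compared directly using \eqref{eq14}.
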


\begin{remark}
In the special case $\sigma=2$, if $R\leq R_o$ and $1\leq p<s$,
estimate \eqref{eq54} yields
\begin{equation}
\label{eq55}
\left(\fint_B |u-u_B|^{p^*}\, d\mu\right)^{1/p^*}\leq
C_s\left(\frac{s-1}{s-p}\right)^{1/s'} b^{-1/p} R^{1-s/p}\Vert g\Vert_{L^p(2B)}.
\end{equation}
This gives the same asymptotic as in Theorem~\ref{T1} when $p\uparrow s$. One can also show that under the assumptions of Theorem~\ref{T1} when $1\leq p<s$, \eqref{eq54} gives
\begin{equation}
\label{eq56}
\Vert u-u_X\Vert_{p^*}\leq C_s\left(\frac{s-1}{s-p}\right)^{1/s'}b^{-1/s}\Vert g\Vert_p,
\end{equation}
and Theorem~\ref{T1} gives the same inequality with $C_s=128\cdot 2^s$.

The deduction of \eqref{eq55} is immediate, whereas \eqref{eq56},
although still elementary, requires an observation  that \eqref{eq20} is in fact true for a possibly larger range of radii, $0<r\leq \rho:=(\mu(X)/b)^{1/s}$, where clearly $\rho\geq\diam X$. We leave details to the reader.
\end{remark}

\begin{remark}
% In the case $p=s\leq1$, Zhou \cite{zhou} obtained uniform continuity under the assumption that the measure is Ahlfors regular. In that setting, the same conclusion also follows from \eqref{eq osc} and the absolute continuity of the integral, since the upper measure bound implies that the measures of balls tend uniformly to zero as their radii tend to zero. Since Theorem~\ref{T2} only assumes a lower measure bound, the
% measure may have atoms, so \eqref{eq osc} alone does not directly imply uniform continuity.
In the case $p=s\leq1$, Zhou \cite{zhou} obtained uniform continuity under the assumption that the measure is Ahlfors regular. In that setting, the upper measure bound ensures that the measures of balls tend uniformly to zero as their radii tend to zero; hence, uniform continuity would follow from \eqref{eq osc} and the absolute continuity of the integral. With only the lower measure bound assumed in Theorem~\ref{T2}, the measure may have atoms, so \eqref{eq osc} does not directly give uniform continuity.
\end{remark}

Although Theorem~\ref{T2} implies Theorem~\ref{T1} and, in fact,
yields stronger conclusions when $0<s=p\leq1$ or $p>s$, we include
a detailed proof of Theorem~\ref{T1} for several reasons.
Its statement is easier to grasp at first glance, and its proof is technically much simpler. This simplicity allows the reader to see clearly the main novelty of our approach. It is particularly useful for readers who do not wish to go through all the technical details of the paper but would still like to understand its main idea. Moreover, understanding the proof of Theorem~\ref{T1} makes it easier to follow the more technical proof of Theorem~\ref{T2}. In the proof of Theorem~\ref{T2}, arguments that closely parallel those used in the proof of Theorem~\ref{T1} will therefore be treated more briefly, allowing us to focus on the genuinely new aspects of the argument.

The results of the paper apply also to the case of the Trudinger inequality for Poincar\'e inequalities on doubling spaces. 

Recall that a measure $\mu$ is called doubling if there exists $C_d\geq 1$ such that $0<\mu(2B)\leq C_d\mu(B)$ for every ball $B\subset X$. 

We will also need the following growth condition from \cite[(21)]{SMP}: there is $C>0$ and $s>0$ such that
\begin{equation}
\label{eq18}
\frac{\mu(B)}{\mu(B_0)}
\geq C\left(\frac{r}{r_0}\right)^s,
\end{equation}
whenever $B_0$ is an arbitrary ball of radius $r_0$ and $B=B(x,r)$, $x\in B_0$, $0<r\leq r_0$.

Standard iteration of the doubling condition implies that \eqref{eq18} is always true with $s=\log_2 C_d$ and $C=4^{-s}$, see for example \cite[Lemma~4.7]{hajlasz}. However, it may happen that a doubling measure satisfies \eqref{eq18} with an exponent smaller than $\log_2 C_d$.

Given $s>0$, and $0<p<\infty$, we say that a pair $(u,g)$, where $u\in L^1_{\rm loc}(X)$, $0\leq g\in L^p_{\rm loc}(X)$ satisfies an
$(1,p)$-Poincar\'e inequality, with parameters $\sigma\geq 1$ and
$C_{PI}>0$ if
\[
\fint_B |u-u_B|\,d\mu
\leq C_{PI}\operatorname{diam}(B)
\left(\fint_{\sigma B} g^p\,d\mu\right)^{1/p}
\]
for every ball $B\subset X$. 

The following theorem was proved in \cite[Theorem~6.1]{SMP}.
\begin{theorem}
\label{T4}
Let $s\in (1,\infty)$. Assume that $(X,d,\mu)$ is connected, $\mu$ is doubling,
and \eqref{eq18} holds. Suppose that the pair $(u,g)$ satisfies
a $(1,s)$-Poincar\'e inequality with parameters $\sigma$ and
$C_{PI}$. Then there exist constants $c,C>0$ such that
\[
\fint_B
\exp\left[
c\left(
\frac{\mu(B)^{1/s}|u-u_B|}
{\operatorname{diam}(B)\,
\|g\|_{L^s(5\sigma B)}}
\right)^{\frac{s}{s-1}}
\right]\,d\mu
\leq C
\]
for every ball $B\subset X$.
\end{theorem}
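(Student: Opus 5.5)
The plan is to follow the classical route of Haj\l{}asz--Koskela: derive the exponential integrability from a pointwise estimate of $|u-u_B|$ by a truncated Riesz potential of $g^s$, and then integrate that estimate. I will fix a ball $B$ and normalize so that $\|g\|_{L^s(5\sigma B)}=1$.

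\emph{Step 1: telescoping.} Because $X$ is connected and $\mu$ is doubling, the $(1,s)$-Poincar\'e inequality can be run along chains of balls. For a.e.\ Lebesgue point $x\in B$ I take $B_0=2B$ and $B_i=B(x,2^{-i}r)$, where $r=\operatorname{diam} B$. Telescoping and the Poincar\'e inequality give
\[
|u(x)-u_{B}|\le C\sum_{i\ge0} 2^{-i}r\Big(\fint_{\sigma B_i} g^s\,d\mu\Big)^{1/s}.
\]
Connectedness is what makes the comparison between $u_B$ and $u_{2B}$ harmless, and in general it lets chains of balls stay inside $5\sigma B$. This is where the enlargement to $5\sigma B$ enters.

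\emph{Step 2: converting radius to measure.} I use \eqref{eq18} to write $2^{-i}r\le C(\mu(B_i)/\mu(B))^{1/s}r$. Then each term is bounded by
\[
C\,\frac{r}{\mu(B)^{1/s}}\Big(\int_{\sigma B_i}g^s\,d\mu\Big)^{1/s},
\]
which is uniform in $i$. Summing the first $N$ terms and applying H\"older with exponents $s'$ and $s$ (the paper's "new idea", $\|a\|_{\ell^1}\le N^{1/s'}\|a\|_{\ell^s}$) gives the following. Since the annuli have bounded overlap, $\sum_i\int_{\sigma B_i}g^s$ over dyadic pieces is at most $C\|g\|_s^s$ only up to a logarithmic count. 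So I instead split the sum at a level $N$. The tail beyond $N$ is controlled by the maximal function $M(g^s)(x)^{1/s}2^{-N}r$, and the head by $N^{1/s'}$. Optimizing $N\approx\log$ of the maximal function yields
\[
\frac{\mu(B)^{1/s}|u(x)-u_B|}{r}\le C\big(1+\log^+ M(g^s\chi_{5\sigma B})(x)\big)^{1/s'} .
\]

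\emph{Step 3: integration.} Exponentiating with power $s'$ and a small $c$ gives $\exp(c(\cdot)^{s'})\le C(1+M(g^s\chi)(x))^{\epsilon}$ with $\epsilon<1$. Its average over $B$ is bounded by the weak type $(1,1)$ estimate for $M$ on doubling spaces, since $\int g^s=1$ relative to $\mu(B)$ after rescaling. I need to recheck that normalization: the rescaling factor $r^s/\mu(B)$ must match the one in Step 2.

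The main obstacle is Step 2. The naive bound produces only the exponent $1$, not $s'$. The fix is the head--tail splitting with H\"older over the head, where the per-term uniformity comes from \eqref{eq18}. If that splitting fails, I would fall back on the Hedberg-type argument of \cite[Theorem~6.1]{SMP}, which first upgrades to a weak-type Sobolev estimate for all $p<s$ with constants $\sim (s-p)^{-1/s'}$ and then sums the Taylor series of the exponential.
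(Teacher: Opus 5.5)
The paper does not reprove this theorem; it is quoted from \cite[Theorem~6.1]{SMP}. Your proposal has a genuine gap, and it is the head estimate in Step~2. Your telescoping runs over the nested balls $B_i=B(x,2^{-i}r)$. After the conversion via \eqref{eq18} the head is $C\sum_{i\le N}\|g\|_{L^s(\sigma B_i)}$, and H\"older gives $N^{1/s'}\bigl(\sum_{i\le N}\int_{\sigma B_i}g^s\,d\mu\bigr)^{1/s}$. Because the balls are nested rather than disjoint, points near $x$ are counted up to $N$ times. So the last sum is only bounded by $N\|g\|^s_{L^s(5\sigma B)}$, as you yourself observe, and the head is $O(N)$, not $O(N^{1/s'})$. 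Splitting into head and tail does not change this: with $N\approx\log M(g^s)(x)$ you recover only the exponent-one inequality. Step~3 would be fine if Step~2 held.

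This cannot be fixed by bookkeeping, because your argument never uses connectedness in an essential way. The places where you invoke it (comparing $u_B$ with $u_{2B}$, keeping the chain inside $5\sigma B$) are irrelevant to the exponent. Theorem~\ref{T3} shows that connectedness is indispensable. In that Ahlfors $s$-regular Cantor space (doubling, satisfying \eqref{eq18}), take $B=X$, so $\mu(X)=\operatorname{diam}X=\|g\|_s=1$ and $\sigma=1$. All your steps except the head bound go through there. For $x\in A_j$ one has $M(g^s)(x)\le 2^{j+1}$, while $|u(x)-u_X|=|U_j-u_X|\gtrsim j/\log j$. The inequality at the end of Step~2 would then give $j/\log j\lesssim j^{1/s'}$, which is false. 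Indeed $\|g\|_{L^s(B_k)}=a_{k+1}$ and $\sum_k\int_{B_k}g^s\,d\mu=\sum_k a_{k+1}^s=\infty$.

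What is missing is a mechanism that makes the increments genuinely $\ell^s$-summable over disjoint pieces, and that is where connectedness has to enter. In the $M^{1,s}$ setting the paper gets such disjointness by indexing the chain by the level sets $E_k$ of $g$, see \eqref{eq8}. That relies on the pointwise inequality \eqref{eq14} with $g\in L^s$. For a $(1,s)$-Poincar\'e pair the pointwise substitute \eqref{eq51} involves $(Mg^s)^{1/s}$, which need not be in $L^s$ --- this is why Theorem~\ref{T9} requires $p<s$.

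Your fallback is only a pointer. You do not explain how a pair satisfying merely a $(1,s)$-Poincar\'e inequality yields Sobolev bounds with constants of order $q^{1/s'}$, nor where connectedness is used. Such bounds for all large $q$ would already give the conclusion, so they too fail in the example of Theorem~\ref{T3}.
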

This result easily implies the following Trudinger inequality for $M^{1,s}$ spaces.
\begin{corollary}
\label{T5}
Let $s\in (1,\infty)$. Assume that $(X,d,\mu)$ is connected, $\mu$ is doubling,
and \eqref{eq18} holds. If $u\in M^{1,s}(X)$ and $g\in D(u)\cap L^s(X)$, then there exist constants $c,C>0$ such that
\begin{equation}
\label{eq19}
\fint_B
\exp\left[
c\left(
\frac{\mu(B)^{1/s}|u-u_B|}
{\operatorname{diam}(B)\,
\|g\|_{L^s(5B)}}
\right)^{\frac{s}{s-1}}
\right]\,d\mu
\leq C
\end{equation}
for every ball $B\subset X$.
\end{corollary}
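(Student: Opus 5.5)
The plan is to deduce Corollary~\ref{T5} from Theorem~\ref{T4} with $\sigma=1$. For that it suffices to check that whenever $g\in D(u)\cap L^s(X)$, the pair $(u,g)$ satisfies a $(1,s)$-Poincar\'e inequality with $\sigma=1$ and a universal constant $C_{PI}$. With $\sigma=1$ the ball $5\sigma B$ in Theorem~\ref{T4} is exactly $5B$, which gives \eqref{eq19}. The hypotheses of Theorem~\ref{T4}, namely connectedness, doubling, \eqref{eq18} and $s>1$, are assumed verbatim in the corollary. Also $u\in L^s(X)\subset L^1_{\rm loc}(X)$ and $g\in L^s(X)\subset L^s_{\rm loc}(X)$, so the pair is admissible.

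The Poincar\'e inequality is the standard one for Haj\l{}asz gradients. Fix a ball $B$ and let $N$ be the null set from \eqref{eq14}. For $x,y\in B\setminus N$ we have
\[
|u(x)-u(y)|\le d(x,y)\,(g(x)+g(y))\le \diam(B)\,(g(x)+g(y)).
\]
Integrating in $y$ over $B$ and averaging gives
\[
|u(x)-u_B|\le \fint_B|u(x)-u(y)|\,d\mu(y)\le \diam(B)\bigl(g(x)+g_B\bigr).
\]
Averaging now in $x$ over $B$ yields
\[
\fint_B|u-u_B|\,d\mu\le 2\diam(B)\fint_B g\,d\mu.
\]
By H\"older's inequality,
\[
\fint_B|u-u_B|\,d\mu\le 2\diam(B)\Bigl(\fint_B g^s\,d\mu\Bigr)^{1/s}.
\]
This is the $(1,s)$-Poincar\'e inequality with $\sigma=1$ and $C_{PI}=2$. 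The averages are finite because $\mu(B)\in(0,\infty)$ and $u,g\in L^1(B)$.

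The only point needing care is the meaning of $\diam(B)$ and of $\fint_{\sigma B}$ when $\sigma=1$. The bound $d(x,y)\le\diam(B)$ for $x,y\in B$ holds for the actual set diameter, so the estimate is valid as stated. Applying Theorem~\ref{T4} then yields the constants $c,C$, which depend only on $s$, $C_d$, the constant in \eqref{eq18} and $C_{PI}=2$. I do not see a real obstacle: the corollary is a direct specialization, and all the depth lies in Theorem~\ref{T4}.
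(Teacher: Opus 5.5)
Your proposal is correct and is the paper's own argument. The paper also uses \eqref{eq14} and H\"older's inequality to get the $(1,s)$-Poincar\'e inequality with $\sigma=1$ and $C_{PI}=2$, and then obtains \eqref{eq19} directly from Theorem~\ref{T4}, where $5\sigma B=5B$.
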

Indeed, if $B\subset X$ is a ball, then \eqref{eq14} and H\"older's inequality give,
\[
\fint_B|u-u_B|\,d\mu
\leq 2\diam B \fint_Bg\,d\mu
\leq 2\diam B \left(\fint_Bg^s\,d\mu\right)^{1/s},
\]
and \eqref{eq19} follows directly from Theorem~\ref{T4}.

Theorem~\ref{T4} requires connectedness in order to obtain the Trudinger inequality from a $(1,s)$-Poincar\'e inequality. The next result shows that connectedness can be dispensed with if the Poincar\'e inequality is available with some exponent $p<s$. Thus, below the critical exponent, the conclusion of Theorem~\ref{T4} remains valid on arbitrary doubling spaces satisfying \eqref{eq18}.

\begin{theorem}
\label{T9}
Let $s\in (1,\infty)$ and $0<p<s$. Assume that $(X,d,\mu)$ is doubling and that \eqref{eq18} holds. Suppose that $u,g\in L^s_{\rm loc}(X)$, $g\geq0$, and that the pair $(u,g)$ satisfies a $(1,p)$-Poincar\'e inequality with parameters $\sigma\geq1$ and $C_{PI}>0$. Then there exist constants $c,C>0$ such that
\begin{equation}
\label{eq50}
 \fint_B
 \exp\left[
 c\left(
 \frac{\,\mu(B)^{1/s}|u-u_B|}
 {R\,\Vert g\Vert_{L^s(10\sigma B)}}
 \right)^{\frac{s}{s-1}}
 \right]d\mu
 \leq C,
\end{equation}
for every ball $B=B(z,R)$. 
\end{theorem}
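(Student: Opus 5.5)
The plan is to reduce Theorem~\ref{T9} to the $M^{1,s}$ machinery of Theorem~\ref{T2}, part (2), by passing from the Poincar\'e pair $(u,g)$ to a Haj\l{}asz pair. The standard route, which uses no connectedness, goes through a fractional maximal function. For $x\in X$ and a fixed ball $B=B(z,R)$ we set
\[
 M_{p,4R}\, g(x)=\sup_{0<r\leq 4R}\left(\fint_{B(x,r)} g^p\,d\mu\right)^{1/p}.
\]
A telescoping argument over the balls $B(x,2^{-k}d(x,y))$ and $B(y,2^{-k}d(x,y))$ then gives, for Lebesgue points $x,y\in 2B$ of $u$,
\[
 |u(x)-u(y)|\leq C\,d(x,y)\bigl(M_{p,4R}(g\chi_{10\sigma B})(x)+M_{p,4R}(g\chi_{10\sigma B})(y)\bigr).
\]
This uses only the $(1,p)$-Poincar\'e inequality and doubling: consecutive balls are comparable, and the Lebesgue differentiation theorem holds on doubling spaces. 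Hence $u|_{2B}\in M^{1,\cdot}$ with gradient $\tilde g=C\,M_{p,4R}(g\chi_{10\sigma B})$.

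The key point is that $p<s$. By the Hardy--Littlewood maximal theorem applied to $g^p$ with exponent $s/p>1$ on the doubling space, we get
\[
 \Vert\tilde g\Vert_{L^s(2B)}\leq C\Vert g\Vert_{L^s(10\sigma B)}.
\]
This step is exactly where $p=s$ fails: the maximal operator is not bounded on $L^1$, which is consistent with the Ahlfors regular counterexample announced in the abstract. Thus $\tilde g\in D(u|_{2B})\cap L^s(2B)$ with norm controlled by the right-hand side of \eqref{eq50}.

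Next we check the measure lower bound \eqref{eq20} on $\sigma'B$ with $\sigma'=2$ and $R_o=R$. Applying \eqref{eq18} with $B_0=B(z,4R)$, $x\in 2B$, $r\leq R$ gives $\mu(B(x,r))\geq C\mu(B_0)(4R)^{-s}r^s$. So we can take $b=c\,\mu(B)R^{-s}$ by doubling. Then $L=\min\{R,R\}=R$ and $L/R=1$. Theorem~\ref{T2}(2), applied to $u|_{2B}$ and $\tilde g$, yields
\[
 \fint_B \exp\Bigl[c_s\Bigl(\frac{b^{1/s}|u-u_B|}{\Vert\tilde g\Vert_{L^s(2B)}}\Bigr)^{s'}\Bigr]d\mu\leq C_s.
\]
Substituting $b^{1/s}=c\,\mu(B)^{1/s}/R$ together with the maximal bound gives \eqref{eq50}. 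The enlargement factor $10\sigma$ leaves room for the balls $\sigma B(x,r)$ with $x\in 2B$, $r\leq 4R$.

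The main obstacle is the pointwise Haj\l{}asz estimate, which must hold without connectedness and with the restricted radii. The telescoping chain from $x$ to $y$ has to stay inside balls of radius at most $4R$, so we use the intermediate ball $B(x,2d(x,y))\supset B(y,d(x,y))$, with $d(x,y)<4R$ for $x,y\in 2B$. We also have to verify that $u\in L^1_{\rm loc}$ gives Lebesgue points almost everywhere. A secondary point is that Theorem~\ref{T2} is stated for $u\in M^{1,s}(\sigma B)$, which additionally requires $u\in L^s(2B)$. This holds by the hypothesis $u\in L^s_{\rm loc}$.
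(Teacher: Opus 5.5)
Your proposal is correct and follows the paper's proof essentially step for step. You turn the $(1,p)$-Poincar\'e pair into a Haj\l{}asz gradient $C\bigl(M(g^p\chi_{10\sigma B})\bigr)^{1/p}$ on $2B$, use $s/p>1$ and the maximal theorem to bound its $L^s(2B)$ norm by $\Vert g\Vert_{L^s(10\sigma B)}$, get \eqref{eq20} on $2B$ from \eqref{eq18} with $b\approx\mu(B)R^{-s}$ so that $L=R$, and apply Theorem~\ref{T2}(2). The only differences are that you prove the pointwise estimate by telescoping instead of citing \cite[Theorem~3.2]{SMP}, and that you take $R_o=R$ where the paper takes $2R$.

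One bookkeeping fix is needed. Because of the dilation $\sigma$ and the intermediate ball $B(x,2d(x,y))$, the radii entering your maximal function go up to $2\sigma d(x,y)<8\sigma R$, not $4R$. So use $M_{p,8\sigma R}$, or simply the full maximal function of $g^p\chi_{10\sigma B}$. Since $2R+8\sigma R\leq 10\sigma R$, these balls still lie in $10\sigma B$, and the rest of your argument is unchanged.
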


The restriction $p<s$ in Theorem~\ref{T9} is essential. In fact, the next result shows that if one replaces $p<s$ by the critical exponent $p=s$, then without connectedness the situation changes completely: even on an Ahlfors $s$-regular compact space, a $(1,s)$-Poincar\'e inequality need not imply exponential integrability with any power $q>1$. Thus Theorem~\ref{T9} gives precisely the improvement of Theorem~\ref{T4} that is possible without connectedness.

\begin{theorem}
\label{T3}
Let $s\in(1,\infty)$. Then there exists an Ahlfors $s$-regular compact metric measure space $(X,d,\mu)$ with the following property: there exists a pair $(u,g)$ with $u,g\in L^s(X)$ satisfying a $(1,s)$-Poincar\'e inequality for all balls in $X$, but
\begin{equation}
\label{eq40}
\int_X \exp\big(c|u-u_X|^q\big)d\mu=\infty
\end{equation}
for every $q>1$ and every $c>0$.
\end{theorem}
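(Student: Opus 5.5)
The plan is to build a totally disconnected, self-similar, Cantor-type Ahlfors $s$-regular set $X$ (for instance a subset of some $\bbbr^N$) with a distinguished point $x_0$ and a nested sequence of clopen cylinders $C_1\supset C_2\supset\cdots$ shrinking to $x_0$. Each $C_j$ should have diameter $\approx\rho^j$, measure $\approx\rho^{js}$, and distance $\gtrsim\rho^j$ to $X\setminus C_j$. Choose heights $h_j\ge0$ and set
\[
u=\sum_j h_j\chi_{C_j},
\]
so that $u$ is constant on every ``annulus'' $C_j\setminus C_{j+1}$ and equals $H_j:=\sum_{i\le j}h_i$ there. The weight $g$ would be supported near the boundaries where $u$ jumps: on (a neighbourhood of) $C_{j+1}$ or on $C_j\setminus C_{j+1}$, of size $\approx h_j\rho^{-j}$ times a factor to be tuned.

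To verify the $(1,s)$-Poincar\'e inequality, take a ball $B=B(x,r)$. By the separation property, either $u$ is constant on $B$ (then the left side vanishes), or $B$ meets at least two annuli, in which case $r\gtrsim\rho^{j}$ for the smallest relevant index. In the second case I would bound $\fint_B|u-u_B|$ by $2\sum_i h_i\,\mu(C_i\cap B)/\mu(B)$. The right side is $\operatorname{diam}(B)\bigl(\fint_B g^s\bigr)^{1/s}\approx\bigl(\int_B g^s\bigr)^{1/s}$ by Ahlfors regularity, and the two would be compared term by term using H\"older's inequality. On the other side, $u\ge H_j$ on a set of measure $\approx\rho^{js}$, so
\[
\int_X \exp\big(c|u-u_X|^q\big)\,d\mu\gtrsim\sum_j\rho^{js}e^{cH_j^q/2}.
\]
This sum diverges for every $q>1$ and $c>0$ as soon as $H_j\gtrsim j^{\alpha}$ with $\alpha$ close to $1$; for example $H_j=j/\log(j+2)$ would do. Meanwhile we need $u,g\in L^s$.

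I expect the main obstacle to be the budget for $g$, and the naive nested construction fails on this point. A jump of height $h_j$ seen by a ball of radius $\approx\rho^j$, with comparable mass on both sides, forces $\int_B g^s\gtrsim h_j^s$. This gives $\Vert g\Vert_s^s\gtrsim\sum_j h_j^s$, and $H_j\le j^{1/s'}\bigl(\sum_i h_i^s\bigr)^{1/s}$ then reproduces exactly the Trudinger bound. So disconnectedness has to be exploited more cleverly: the jumps must be arranged so that every ball seeing a large jump sees it only with a small relative mass $\mu(C\cap B)/\mu(B)$.

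The first thing I would try is variable branching. Split $C_j$ into $M_j\to\infty$ children arranged like a lattice (this is still compatible with $s$-regularity), and let $u$ jump only on one child, or on a sparse subfamily of children. I would also make $g$ decay away from that child in a way adapted to the relative mass $(\text{child diameter}/r)^s$, so that the total cost per level becomes much smaller than $h_j^s$, ideally summable while $H_j\to\infty$ almost linearly. This is where I am least sure the accounting closes: nearby small balls containing the child together with a neighbour still see a jump of order $h_j$ at full relative mass. If so, the decisive trick must be to exploit that the Poincar\'e inequality involves $\mu(B)$ of the whole ball, while $g$ may concentrate heavily near $x_0$. Finding the right placement of the mass of $g^s$ so that it is shared by all balls that see the large jumps is the step I would work out first.
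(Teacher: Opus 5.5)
Your setup (nested clopen cylinders $C_j$ shrinking to $x_0$, $u=\sum_j h_j\chi_{C_j}$, heights with $H_j\approx j/\log j$) is exactly the paper's, with $C_j=B_j$ and $h_j=a_j=\log 2/\log(j+1)$. The genuine gap is the step where you conclude that this nested construction cannot work. From ``every ball seeing the jump at level $j$ with comparable mass on both sides has $\int_B g^s\gtrsim h_j^s$'' you infer $\Vert g\Vert_s^s\gtrsim\sum_j h_j^s$. That inference needs the balls for different $j$ to be essentially disjoint, and here they are not. Up to a bounded dilation, the balls seeing the jump at level $j$ are the cylinders $C_m$, $m\le j$, all nested around $x_0$, so the same $g^s$-mass near $x_0$ is counted at every level. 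The actual constraint is only $\int_{C_j}g^s\gtrsim h_{j+1}^s$ for all $j$, i.e.\ $\Vert g\Vert_s^s\gtrsim\sup_j h_j^s$. This is compatible with $h_j\downarrow0$ and $\sum_j h_j=\infty$. Having discarded a working construction, you turn to variable branching, which you do not carry out and yourself doubt. So the proposal never specifies $g$ and never verifies the Poincar\'e inequality.

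The missing choice is the telescoping one. Let $g$ be constant on each annulus $A_j=C_j\setminus C_{j+1}$ with $\int_{A_j}g^s=h_{j+1}^s-h_{j+2}^s$. Then $\int_{C_j}g^s=h_{j+1}^s$ and $\Vert g\Vert_s=h_1$. Since $u-H_j=\sum_{k>j}h_k\chi_{C_k}\ge0$ on $C_j$,
\[
\fint_{C_j}|u-u_{C_j}|\,d\mu\le 2\sum_{k>j}h_k\frac{\mu(C_k)}{\mu(C_j)}\lesssim h_{j+1}\approx\diam C_j\Bigl(\fint_{C_j}g^s\,d\mu\Bigr)^{1/s},
\]
because $(h_k)$ is decreasing and $\mu(C_k)/\mu(C_j)$ decays geometrically. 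Your closing remark about mass ``shared by all balls that see the large jumps'' is precisely this idea, and it closes the argument with no branching.

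You would also still need to control balls that do not contain $x_0$. With a Euclidean-type metric (e.g.\ the snowflaked ternary set), such a ball can straddle annulus boundaries with comparable masses while missing the deep part of $g^s$. A ball covering exactly $A_j\cup A_{j+1}\cup A_{j+2}$ has oscillation $\approx h_{j+1}$ but $\int_B g^s=h_{j+1}^s-h_{j+4}^s\ll h_{j+1}^s$ for slowly varying $h$. So the telescoping $g$ fails with $\sigma=1$, and your plan of comparing inside $B$ itself breaks down. Your separation property gives $d(x,x_0)\lesssim r$ for balls meeting two annuli, so this is repairable by taking $\sigma$ large enough that $\sigma B\supset C_j$. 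The paper avoids the issue with the ultrametric $d(x,y)=\lambda^{N(x,y)-1}$, $\lambda=2^{-1/s}$. There every ball containing $0$ is some $B_j$, and every other ball lies in a single annulus where $u$ is constant (Lemma~\ref{T8}). The Poincar\'e inequality then holds with $\sigma=1$, $C_{PI}=2$, by the computation displayed above.
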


Theorem~\ref{T4} was extended in \cite{MP} to a much broader class of functionals satisfying a certain $T_p$ condition. They proved a Trudinger-type inequality under the assumption that the space is connected, and in Section~4 they showed that, in general, the conclusion fails without connectedness. However, their counterexample does not imply Theorem~\ref{T3}, because the functional used there is not associated with a $(1,s)$-Poincar\'e inequality. In fact, the construction of their counterexample is very simple precisely because of the considerable freedom allowed in the choice of the functional.

\subsection*{Notation}
We use $C$ and $c$ to denote generic positive constants whose values may change from line to line. Typically, $C$ denotes a large constant and $c$ a small one. Subscripts indicate dependence on parameters; for example, $C_{s,p}$ and $c_{s,p}$ depend only on $s$ and $p$. We write $\Vert g\Vert_p$ for the $L^p$ norm on $X$ and $\Vert g\Vert_{L^p(E)}$ for the $L^p$ norm on a measurable subset $E\subset X$.
By $D(u)$ we denote the class of measurable functions $g\geq 0$ satisfying \eqref{eq14}. 
By $f_A=\fint_A f\, d\mu:=\mu(A)^{-1}\int_A f\, d\mu$ we denote the integral average of $f$.

\subsection*{Declaration on the use of generative AI} 
The authors used OpenAI's ChatGPT to check proofs, search literature and improve the exposition. All mathematical arguments, results, and references appearing in the final manuscript were written and verified by the authors, who take full responsibility for the content of the paper.
The work of Piotr Hajłasz was supported in part by a grant of access to OpenAI models through the ChatGPT for Academic Researchers program.
\section{Global estimates in bounded spaces}

\begin{proof}[Proof of Theorem~\ref{T1}]
We may assume that $\Vert g\Vert_p>0$ and that, for some null set $N$,
\[
 |u(x)-u(y)|\leq d(x,y)(g(x)+g(y))
\]
for every $x,y\in X\setminus N$. Since
\[
 \mu(X)\ge b(\diam X)^s,
\]
we have
\begin{equation}
\label{eq2}
\diam X\leq b^{-1/s}\mu(X)^{1/s}.
\end{equation}
Let
\[
 E_k=\left\{x\in X\setminus N:\ g(x)\leq 2^k\mu(X)^{-1/p}\Vert g\Vert_p\right\},
 \qquad
 m_k=\mu(X\setminus E_k).
\]
Note that it is an increasing family of sets, $E_k\subset E_{k+1}$.
Enlarging $N$ if necessary, we may assume that $E_j=X\setminus N$ whenever $m_j=0$, and hence $E_k=X\setminus N$ for all $k\geq j$.

Chebyshev's inequality gives
\begin{equation}
\label{eq3}
m_k\leq 2^{-kp}\mu(X).
\end{equation}
Let $k_o$ be the smallest positive integer such that
\[
 k_op>1.
\]
Then \eqref{eq3} implies $m_{k_o}<\mu(X)/2$, so $E_{k_o}\ne\varnothing$. Fix $x_o\in E_{k_o}$ and put
\[
 a_k=\sup_{E_k}|u-u(x_o)|,
 \qquad k\ge k_o.
\]
On $E_{k_o}$ we have
\[
 |u(x)-u(y)|\leq 2^{k_o+1}\mu(X)^{-1/p}\Vert g\Vert_p d(x,y),
\]
so $u$ is Lipschitz continuous on $E_{k_o}$, and by \eqref{eq2},
\begin{equation}
\label{eq4}
a_{k_o}
\leq 4\cdot 2^{1/p} b^{-1/s}\mu(X)^{1/s-1/p}\Vert g\Vert_p,
\end{equation}
because $(k_o-1)p\leq 1$.

We claim that 
\begin{equation}
\label{eq9}
a_k\leq a_{k_o}
+2^{1+1/s}b^{-1/s}\mu(X)^{-1/p}\Vert g\Vert_p\sum_{i=k_o+1}^k 2^im_{i-1}^{1/s},
\qquad
k> k_o.
\end{equation}
To prove it, it suffices to show that
\begin{equation}
\label{eq5}
a_k\leq a_{k-1}
+2^{1+1/s}b^{-1/s}\mu(X)^{-1/p}\Vert g\Vert_p 2^km_{k-1}^{1/s},
\end{equation}
because \eqref{eq9} will follow from a simple iteration applied to \eqref{eq5}.

If $m_{k-1}=0$, then $E_{k-1}=E_k=X\setminus N$; thus $a_k=a_{k-1}$ and \eqref{eq5} holds, so we can assume that $m_{k-1}>0$. Let
\begin{equation}
\label{eq6}
 r_k=\left(\frac{2m_{k-1}}b\right)^{1/s}.
\end{equation}
Let $x\in E_k$. We claim that
\begin{equation}
\label{eq7}
d(x,y)\leq r_k
\qquad
\text{for some } y\in E_{k-1}.
\end{equation}
This is obvious if $r_k\geq \diam X$, because $E_{k-1}\neq\varnothing$, and if $0<r_k<\diam X$, \eqref{eq1} yields
\[
\mu(B(x,r_k))\ge 2m_{k-1},
\quad
\text{so}
\quad
B(x,r_k)\cap E_{k-1}\neq\varnothing,
\]
proving \eqref{eq7}.
Since $x,y\in E_k$, $d(x,y)\leq r_k$, and $y\in E_{k-1}$, we have
\[
|u(x)-u(x_o)|\leq |u(y)-u(x_o)|+|u(x)-u(y)|\leq a_{k-1}+r_k2^{k+1}\mu(X)^{-1/p}\Vert g\Vert_p
\]
and \eqref{eq5} follows from \eqref{eq6} upon taking supremum over $x\in E_k$.

We shall need the following fact
\begin{equation}
\label{eq8}
\sum_{k\in\mathbb Z}
2^{kp}m_{k-1}
\leq \frac{2^p}{1-2^{-p}} \mu(X).
\end{equation}
Indeed, Fubini's theorem yields
\[
\begin{split}
LHS
&=
\sum_{k\in\bbbz}\int_X 2^{kp}\chi_{\{x:g(x)>2^{k-1}\mu(X)^{-1/p}\Vert g\Vert_p}\}\, d\mu(x)
=
\int_X\sum_{\{ k:\, 2^k<2g(x)\mu(X)^{1/p}\Vert g\Vert_p^{-1}\}}2^{kp}\, d\mu(x)\\
&\leq
\int_X\frac{(2g(x)\mu(X)^{1/p}\Vert g\Vert_p^{-1})^p}{1-2^{-p}}\, d\mu(x)=\frac{2^p}{1-2^{-p}}\mu(X).
\end{split}
\]

\noindent
{\sc The subcritical case $0<p<s$.}
Our first aim is to prove that
\begin{equation}
\label{eq10}
a_k\leq \Gamma_{s,p}\,b^{-1/s}\mu(X)^{-1/p^*}\Vert g\Vert_p\,2^{kp/p^*},
\qquad k\ge k_o,
\end{equation}
and that 
\begin{equation}
\label{eq15}
\Gamma_{s,p}\leq 32\left(\frac{s-1}{s-p}\right)^{1/s'}
\qquad
\text{when } 1\leq p<s.
\end{equation}
We first assume that $s>1$ and write $s'=s/(s-1)$. Applying H\"older's inequality with exponents $s,s'$ to \eqref{eq9}, and using the estimate in \eqref{eq8}, we obtain
\begin{align*}
a_k
&\leq a_{k_o}
 +2^{1+1/s}b^{-1/s}\mu(X)^{-1/p}\Vert g\Vert_p
 \left(
 \sum_{i=k_o+1}^k
 2^{ip}m_{i-1}
 \right)^{1/s}\cdot
 \left(
 \sum_{i=k_o+1}^k
2^{i(1-\frac{p}{s})s'}\right)^{1/s'}
\end{align*}
Put
\[
\theta=\left(1-\frac ps\right)s'
=\frac{s-p}{s-1}>0.
\]
By \eqref{eq8},
\[
\left(\sum_{i=k_o+1}^k2^{ip}m_{i-1}\right)^{1/s}
\leq
\left(\frac{2^p}{1-2^{-p}}\right)^{1/s}\mu(X)^{1/s},
\]
while
\[
\left(\sum_{i=k_o+1}^k2^{i\theta}\right)^{1/s'}
\leq
\left(\frac{2^{k\theta}}{1-2^{-\theta}}\right)^{1/s'}
=
(1-2^{-\theta})^{-1/s'}2^{kp/p^*}.
\]
Consequently,
\[
a_k\leq a_{k_o}
+2^{1+\frac{1}{s}+\frac{p}{s}}(1-2^{-p})^{-1/s}(1-2^{-\theta})^{-1/s'}
 b^{-1/s}\mu(X)^{-1/p^*}\Vert g\Vert_p\,2^{kp/p^*}.
\]
Now, we will estimate $a_{k_o}$. Since $k\geq k_o$ and $k_op>1$, we have $kp/p^*>1/p^*=\frac{1}{p}-\frac{1}{s}$. Hence, \eqref{eq4} yields
\[
a_{k_o}\leq 2^{2+\frac{1}{s}}b^{-1/s}\mu(X)^{-1/p^*}\Vert g\Vert_p 2^{\frac{1}{p}-\frac{1}{s}}\leq
2^{2+\frac{1}{s}} b^{-1/s}\mu(X)^{-1/p^*}\Vert g\Vert_p 2^{kp/p^*},
\quad
k\geq k_o.
\]
Thus, for $s>1$ and $0<p<s$, \eqref{eq10} holds with
\[
 \Gamma_{s,p}
 =2^{3+1/s}(1-2^{-p})^{-1/s}
 \left(1-2^{-\theta}\right)^{-1/s'}.
\]
If $1\leq p<s$, then $1-2^{-p}\ge\frac12$ and 
$1-2^{-\theta}\ge \theta/2$, because $\theta\in(0,1]$, so 
\[
\Gamma_{s,p}
\leq 2^{4+1/s}
\left(\frac{s-1}{s-p}\right)^{1/s'}
\leq 32\left(\frac{s-1}{s-p}\right)^{1/s'}.
\]
This proves \eqref{eq15}. 

It remains to prove \eqref{eq10} when $0<s\leq 1$ (and $0<p<s$). Using
$\sum_j\alpha_j\leq\left(\sum_j\alpha_j^s\right)^{1/s}$
we get
\begin{equation}
\label{eq39}
\sum_{i=k_o+1}^k2^im_{i-1}^{1/s}
\leq
\left(\sum_{i=k_o+1}^k2^{is}m_{i-1}\right)^{1/s}
\leq
2^{kp/p^*}
\left(\sum_{i=k_o+1}^k2^{ip}m_{i-1}\right)^{1/s}
\leq C_{s,p}2^{kp/p^*}\mu(X)^{1/s}.
\end{equation}
We used here a simple observation that since $s-p>0$,
\[
2^{is}=2^{ip}2^{i(s-p)}\leq 2^{k(s-p)}2^{ip}=2^{kps/p^*}2^{ip}
\]
and the last inequality in \eqref{eq39} follows from \eqref{eq8}. Hence \eqref{eq9} yields
\[
a_k\leq a_{k_o}+C_{s,p}b^{-1/s}\mu(X)^{-1/p^*}\Vert g\Vert_p2^{kp/p^*}.
\]
The remaining term $a_{k_o}$ is absorbed using \eqref{eq4} and this completes the proof of \eqref{eq10}.

Now we can complete the proof of the embedding theorem.
Let
\[
 F_{k_o}=E_{k_o},
 \qquad
 F_k=E_k\setminus E_{k-1},\quad k>k_o.
\]
Up to a null set,
\[
 X=\bigcup_{k\geq k_o}F_k.
\]
Since $(k_o-1)p\leq1$ and $p<s$,
\[
 2^{k_op}\mu(F_{k_o})
 \leq 2^{p+1}\mu(X)
 \leq 2^{s+1}\mu(X).
\]
If $k>k_o$, then on $F_k$ we have
$2^{kp}<2^p\mu(X)\Vert g\Vert_p^{-p}g(x)^p$
and hence
\begin{equation}
\label{eq11}
 \sum_{k\geq k_o}2^{kp}\mu(F_k)
 \leq
2^{s+1}\mu(X)+ 
2^p\mu(X)\Vert g\Vert_p^{-p}
\sum_{k>k_o}\int_{F_k}g^p\,d\mu
 \leq 2^{s+2}\mu(X).
\end{equation}
Since $|u-u(x_o)|\leq a_k$ on $F_k$,
using \eqref{eq10} and \eqref{eq11}, we obtain
\[
\int_X|u-u(x_o)|^{p^*}\,d\mu
\leq \Gamma_{s,p}^{p^*}b^{-p^*/s}\mu(X)^{-1}\Vert g\Vert_p^{p^*}
\sum_{k\geq k_o}2^{kp}\mu(F_k)
\leq 2^{s+2}\Gamma_{s,p}^{p^*}b^{-p^*/s}\Vert g\Vert_p^{p^*}.
\]
Therefore,
\[
\inf_{\gamma\in\mathbb R}\Vert u-\gamma\Vert_{p^*}
\leq 2^{(s+2)/p^*} \Gamma_{s,p}b^{-1/s}\Vert g\Vert_p.
\]
If $1\leq p<s$, then $p^*\geq s'$ and hence \eqref{eq15} gives
\[
\inf_{\gamma\in\mathbb R}\Vert u-\gamma\Vert_{p^*}
\leq 2^{(s+2)/s'}\cdot 32 \left(\frac{s-1}{s-p}\right)^{1/s'}
b^{-1/s}\Vert g\Vert_p
\leq 64\cdot 2^s\left(\frac{s-1}{s-p}\right)^{1/s'}
b^{-1/s}\Vert g\Vert_p.
\]
Finally, if $p^*\geq1$, then 
$\Vert u-u_X\Vert_{p^*}\leq 2\inf_{\gamma\in\bbbr}\Vert u-\gamma\Vert_{p^*}$.
This completes the proof in the case $0<p<s$.

\noindent
{\sc The critical case $p=s>1$.}
Note that $k_o=1$. From \eqref{eq9}, H\"older's inequality, and \eqref{eq8}, we get
\begin{align*}
 a_k
 &\leq a_{1}
 +2^{1+1/s}b^{-1/s}\mu(X)^{-1/s}\Vert g\Vert_{s}
 (k-1)^{1/s'}\left(\sum_{i=2}^k2^{is}m_{i-1}\right)^{1/s}
\leq
a_1+C_s b^{-1/s}\Vert g\Vert_sk^{1/s'}.
\end{align*}
Together with \eqref{eq4} this gives
\begin{equation}
\label{eq12}
 a_k\leq C_s b^{-1/s}\Vert g\Vert_{s} k^{1/s'},
 \qquad k\geq k_o.
\end{equation}
Hence for a sufficiently small constant $c_s$,
\[
c_s\left(\frac{b^{1/s}|u-u(x_o)|}{\Vert g\Vert_s}\right)^{s'}
\leq \frac{s\ln 2}{2}\, k
\qquad
\text{on } E_k,\ k\geq 1,
\]
and note that $\exp(\frac{s\ln 2}{2}k)=2^{sk/2}$.

Let $F_{1}=E_{1}$ and $F_k=E_k\setminus E_{k-1}$ for $k>1$. 
Observe that
\[
\mu(F_k)\leq 2^{-s(k-1)}\mu(X),
\qquad
k\geq 1.
\]
Indeed, it is obvious if $k=1$ and for $k>1$ it follows from 
\eqref{eq3}, because $\mu(F_k)\leq m_{k-1}$. Therefore,
\[
\begin{split}
\fint_X\exp\!\left[\frac{c_s}{2^{s'}}\left(\frac{b^{1/s}|u-u_X|}{\Vert g\Vert_s}\right)^{s'}\right]\, d\mu
&\leq
\fint_X\exp\!\left[c_s\left(\frac{b^{1/s}|u-u(x_o)|}{\Vert g\Vert_s}\right)^{s'}\right]\, d\mu\\
&\leq
\mu(X)^{-1}\sum_{k\geq 1} 2^{sk/2}\mu(F_k)\leq C_s,
\end{split}
\]
where the first inequality is a straightforward consequence of the following lemma. 
\begin{lemma}
\label{T6}
If $q>1$ and $C>0$, then
\[
\fint_X\exp\left(\frac{C}{2^q}|u-u_X|^q\right)\, d\mu\leq
\inf_{\gamma\in\bbbr}
\fint_X\exp(C|u-\gamma|^q)\, d\mu.
\]
\end{lemma}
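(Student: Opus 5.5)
Fix $\gamma\in\bbbr$ for which the right-hand side is finite; otherwise there is nothing to prove. Since $\exp(C|u-\gamma|^q)\ge 1+C|u-\gamma|^q$, the function $|u-\gamma|^q$ is integrable, and since $q>1$ and $\mu(X)<\infty$, so is $u$. Hence $u_X$ is defined. The plan is to split the pointwise difference $u-u_X$ into two pieces, $u-\gamma$ and $\gamma-u_X$, and use convexity twice: once for the power function and once for the exponential.

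\textbf{Pointwise splitting.} The map $t\mapsto |t|^q$ is convex for $q\ge1$, so
\[
\left|\frac{a+b}{2}\right|^q\le \frac{|a|^q+|b|^q}{2}.
\]
Applying this with $a=u(x)-\gamma$ and $b=\gamma-u_X$ gives
\[
\frac{|u(x)-u_X|^q}{2^q}\le \frac12|u(x)-\gamma|^q+\frac12|\gamma-u_X|^q .
\]
Multiplying by $C$, exponentiating, and using $e^{(A+B)/2}\le \frac12(e^A+e^B)$, we obtain
\[
\exp\!\left(\frac{C}{2^q}|u(x)-u_X|^q\right)\le \frac12\exp(C|u(x)-\gamma|^q)+\frac12\exp(C|\gamma-u_X|^q).
\]

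\textbf{Averaging.} Next we integrate this inequality over $X$ with respect to the averaged measure. The first term on the right produces $\frac12\fint_X\exp(C|u-\gamma|^q)\,d\mu$. For the second term, which is a constant, we use Jensen twice. First, $|\gamma-u_X|^q=|\fint_X(\gamma-u)\,d\mu|^q\le \fint_X|u-\gamma|^q\,d\mu$. Then, by Jensen for the convex exponential,
\[
\exp\!\Big(C\fint_X|u-\gamma|^q\,d\mu\Big)\le \fint_X \exp(C|u-\gamma|^q)\,d\mu .
\]
Adding the two halves gives
\[
\fint_X\exp\!\left(\frac{C}{2^q}|u-u_X|^q\right)d\mu\le \fint_X\exp(C|u-\gamma|^q)\,d\mu .
\]
Taking the infimum over $\gamma$ finishes the proof.

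The only real obstacle is the constant $2^{-q}$, which must come out exactly. Splitting with equal weights $\frac12$ is what makes this work, since it produces exactly the factor $2^{-q}$. The integrability of $u$, needed for $u_X$ to make sense, has already been handled at the start.
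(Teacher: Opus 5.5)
Your proof is correct. It follows the same outline as the paper's: split $u-u_X=(u-\gamma)+(\gamma-u_X)$, use convexity of $|t|^q$, and control the constant term by Jensen. The difference is in how you handle the exponential of the sum.

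The paper first bounds $|\gamma-u_X|^q\le\fint_X|u-\gamma|^q\,d\mu$. It then factors the exponential as $\exp(\frac{C}{2^q}|u-u_X|^q)\le\exp(\frac C2|u-\gamma|^q)\exp(\frac C2\fint_X|u-\gamma|^q\,d\mu)$ and applies Jensen to the second factor at level $C/2$. Integrating gives $\bigl[\fint_X\exp(\frac C2|u-\gamma|^q)\,d\mu\bigr]^2$, and Cauchy--Schwarz finishes the argument.

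You instead use the additive convexity bound $e^{(A+B)/2}\le\frac12(e^A+e^B)$. The two pieces then separate after integration, and a single application of Jensen at the full level $C$ handles the constant term, so no Cauchy--Schwarz step is needed. Both routes produce exactly the factor $2^{-q}$ and are equally elementary; yours is marginally more direct.

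You also check explicitly that a finite right-hand side forces $u\in L^1(X)$, so that $u_X$ makes sense. The paper leaves this implicit; in its application $u\in L^s(X)$ and $\mu(X)<\infty$.
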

\begin{proof}
Since
\[
|u-u_X|^q\leq 2^{q-1}(|u-\gamma|^q+|\gamma-u_X|^q)\leq
2^{q-1}\left(|u-\gamma|^q+\fint_X|u-\gamma|^q\, d\mu\right),
\]
Jensen's inequality yields
\[
\exp\left(\frac{C}{2^q}|u-u_X|^q\right)\leq 
\exp\left(\frac{C}{2}|u-\gamma|^q\right)
\fint_X\exp\left(\frac{C}{2}|u-\gamma|^q\right)\, d\mu.
\]
Integrating and applying the Cauchy-Schwarz inequality yields
\[
\fint_X
\exp\left(\frac{C}{2^q}|u-u_X|^q\right)\, d\mu\leq
\left[\fint_X\exp\left(\frac{C}{2}|u-\gamma|^q\right)\, d\mu\right]^2
\leq \fint_X \exp(C|u-\gamma|^q)\, d\mu.
\]
The proof of the lemma is complete.
\end{proof}
This also completes the proof in the case $p=s>1$.

\noindent
{\sc The critical case $p=s$, $0<s\leq1$.} 
Now, $k_o$ depends only on $s$. Using
$\sum_j\alpha_j\leq\left(\sum_j\alpha_j^s\right)^{1/s}$
in \eqref{eq9}, and then using \eqref{eq8} and \eqref{eq4}, we obtain
$a_k\leq C_sb^{-1/s}\Vert g\Vert_{s}$ for every $k\geq k_o$.
Since $\bigcup_{k\geq k_o}E_k=X\setminus N$, it follows that $u\in L^\infty(X)$, so $u_X$ is defined and
\[
 \Vert u-u_X\Vert_{\infty}\leq 2\Vert u-u(x_o)\Vert_\infty
 \leq C_sb^{-1/s}\Vert g\Vert_{L^s(X)}.
\]

\noindent
{\sc The supercritical case $p>s$.}
Using \eqref{eq3} in \eqref{eq5}, we obtain
\[
 a_k-a_{k-1}
 \leq C_{s,p}b^{-1/s}\mu(X)^{1/s-1/p}\Vert g\Vert_p
 2^{k(1-p/s)},
 \qquad k>k_o.
\]
Since $p>s$, the geometric series $\sum_{k>k_o}2^{k(1-p/s)}$ converges to a constant that depends on $s$ and $p$ only, because $k_o$ depends only on $p$. Together with \eqref{eq4}, this gives
\[
\Vert u-u_X\Vert_\infty\leq 2\Vert u-u(x_o)\Vert_\infty=
2\sup_{k\geq k_o}a_k
\leq C_{s,p}b^{-1/s}\mu(X)^{1/s-1/p}\Vert g\Vert_p.
\]
\end{proof}

\section{Local estimates}

\begin{proof}[Proof of Theorem~\ref{T2}]
We may assume that $\Vert g\Vert_{L^p(\sigma B)}>0$, since otherwise $u$ is constant almost everywhere on $\sigma B$ and all the conclusions are immediate. Enlarging a null set $N$, if necessary, we may also assume that $g$ is finite on $\sigma B\setminus N$ and that
\[
 |u(x)-u(y)|\leq d(x,y)(g(x)+g(y))
\]
for every $x,y\in\sigma B\setminus N$.

We define the level sets exactly as in the proof of Theorem~\ref{T1}, with $\mu(B)$ in place of $\mu(X)$. Namely, for $k\in\bbbz$ let
\[
 E_k=\left\{x\in\sigma B\setminus N:\
 g(x)\leq 2^k\mu(B)^{-1/p}\Vert g\Vert_{L^p(\sigma B)}\right\},
 \qquad
 m_k=\mu(\sigma B\setminus E_k).
\]
The sets $E_k$ are increasing. Enlarging $N$ by a countable union of null sets, if necessary, we may assume that whenever $m_k=0$, we have $E_k=\sigma B\setminus N$, and hence $E_j=\sigma B\setminus N$ for every $j\geq k$. Since $g$ is finite on $\sigma B\setminus N$,
\[
 \sigma B\setminus N=\bigcup_{k\in\bbbz}E_k.
\]
Chebyshev's inequality gives
\begin{equation}
\label{eq21}
 m_k\leq 2^{-kp}\mu(B).
\end{equation}
Similar to the proof of Theorem~\ref{T1}, we shall need the following estimate:
\begin{equation}
\label{eq22}
 \sum_{k\in\bbbz}2^{kp}m_{k-1}
 \leq \frac{2^p}{1-2^{-p}}\mu(B).
\end{equation}
This is proven using the same Fubini calculation as in \eqref{eq8}, where the integration is over $\sigma B$ and we normalize by $\mu(B)$ instead of $\mu(X)$.

Recall that
\[
 L=\min\{R_o,(\sigma-1)R\}.
\]
Since $L\leq R$ and $L\leq R_o$, applying \eqref{eq20} at the center $z$ of $B$ gives
\begin{equation}
\label{eq23}
 \mu(B)\geq\mu(B(z,L))\geq bL^s.
\end{equation}
Put
\[
 \Lambda_{s,p}=\left(\frac{2^{1/s}}{1-2^{-p/s}}\right)^{s/p},
\]
and let $k_o$ be the smallest integer such that
\[
 \Lambda_{s,p}^{p/s}\,
 b^{-1/s}\mu(B)^{1/s}2^{-k_op/s}\leq L.
\]
Such an integer exists because the left hand side tends to zero as $k_o\to\infty$. By the minimality of $k_o$,
\begin{equation}
\label{eq24}
 \Lambda_{s,p}b^{-1/p}\mu(B)^{1/p}L^{-s/p}
 \leq2^{k_o}
 <2\Lambda_{s,p}b^{-1/p}\mu(B)^{1/p}L^{-s/p}.
\end{equation}
Notice that \eqref{eq23} and the left hand inequality in \eqref{eq24} imply $2^{k_o}>1$, so $k_o\geq1$. Hence, by \eqref{eq21},
\[
 m_{k_o}\leq2^{-k_op}\mu(B)<\mu(B),
\]
and therefore $E_{k_o}\cap B\ne\varnothing$. Fix $x_o\in E_{k_o}\cap B$ and put $\gamma=u(x_o)$. On $E_{k_o}$ the function $u$ is
$2^{k_o+1}\mu(B)^{-1/p}\Vert g\Vert_{L^p(\sigma B)}$-Lipschitz. Since $1<\sigma\leq2$,
$\diam(\sigma B)\leq4R$, and hence
\begin{equation}
\label{eq25}
 a_{k_o}:=\sup_{E_{k_o}}|u-\gamma|
 \leq2^{k_o+3}R\mu(B)^{-1/p}\Vert g\Vert_{L^p(\sigma B)}
 \leq 16\Lambda_{s,p}\frac RL\,b^{-1/p}L^{1-s/p}
 \Vert g\Vert_{L^p(\sigma B)}.
\end{equation}
In the last inequality we used the upper bound in \eqref{eq24}.

For $j>k_o$ define
\[
 r_j=\left(\frac{2m_{j-1}}b\right)^{1/s}.
\]
By \eqref{eq21},
\[
 r_j\leq2^{1/s}b^{-1/s}\mu(B)^{1/s}2^{-(j-1)p/s}.
\]
Therefore, by the definition of $k_o$,
\begin{equation}
\label{eq26}
 \sum_{j=k_o+1}^{\infty}r_j
 \leq\frac{2^{1/s}}{1-2^{-p/s}}\,
 b^{-1/s}\mu(B)^{1/s}2^{-k_op/s}
 \leq L.
\end{equation}
In particular, $r_j\leq L\leq R_o$ for every $j>k_o$.

For $x\in E_k\cap B$, $k>k_o$, we use the construction from
\eqref{eq6}--\eqref{eq7} to choose points $x_k=x$ and $x_j\in E_j$,
$k_o\leq j<k$, with $d(x_j,x_{j-1})\leq r_j$. However, this time we must also verify that every ball used in this construction lies inside $\sigma B$.
Indeed, suppose that $x_j,\ldots,x_k$ have already been chosen. 
If $m_{j-1}=0$, then
$E_{j-1}=E_j=\sigma B\setminus N$ and $r_j=0$, so taking
$x_{j-1}=x_j$ gives the desired step. We may therefore assume
that $m_{j-1}>0$. Then, for $w\in B(x_j,r_j)$, we use $x\in B$ and \eqref{eq26} to estimate,
\[
 d(w,z)<R+\sum_{i=j}^k r_i\leq R+L\leq\sigma R.
\]
Thus $B(x_j,r_j)\subset\sigma B$. Since $0<r_j\leq R_o$, \eqref{eq20} gives
\[
 \mu(B(x_j,r_j))\geq br_j^s=2m_{j-1}>m_{j-1}.
\]
It follows that $B(x_j,r_j)\cap E_{j-1}\neq\varnothing$, and we may therefore choose $x_{j-1}$ in this intersection. This completes the construction.

Since $E_{j-1}\subset E_j$, both $x_j$ and $x_{j-1}$ belong to $E_j$. Hence
\begin{align*}
|u(x_j)-u(x_{j-1})|
&\leq2^{j+1}\mu(B)^{-1/p}\Vert g\Vert_{L^p(\sigma B)}r_j
\\
&=2^{1+1/s}b^{-1/s}\mu(B)^{-1/p}\Vert g\Vert_{L^p(\sigma B)}
2^jm_{j-1}^{1/s}.
\end{align*}
Combining this estimate with the definition of $a_{k_o}$, we obtain the basic estimate
\begin{equation}
\label{eq27}
 \sup_{E_k\cap B}|u-\gamma|
 \leq a_{k_o}
 +2^{1+1/s}b^{-1/s}\mu(B)^{-1/p}\Vert g\Vert_{L^p(\sigma B)}
 \sum_{j=k_o+1}^k2^jm_{j-1}^{1/s},
 \qquad k>k_o.
\end{equation}
This is the local counterpart of the estimate \eqref{eq9}. 
Since \eqref{eq22} provides the same summability
bound as \eqref{eq8}, with $\mu(B)$ in place of $\mu(X)$,
we will refer to the corresponding arguments
in the proof of Theorem~\ref{T1} when estimating the sum
on the right-hand side of \eqref{eq27}.

\medskip
\noindent
{\sc The subcritical case $0<p<s$.}
Let $p^*=sp/(s-p)$. Our first aim is to prove that
\begin{equation}
\label{eq28}
 \sup_{E_k\cap B}|u-\gamma|
 \leq C_{s,p}\frac RL\,b^{-1/s}\mu(B)^{-1/p^*}
 \Vert g\Vert_{L^p(\sigma B)}2^{kp/p^*},
 \qquad k\geq k_o,
\end{equation}
and, when $1\leq p<s$, the sharper estimate
\begin{equation}
\label{eq29}
 \sup_{E_k\cap B}|u-\gamma|
 \leq C_s\left(
 \frac RL+\left(\frac{s-1}{s-p}\right)^{1/s'}
 \right)b^{-1/s}\mu(B)^{-1/p^*}
 \Vert g\Vert_{L^p(\sigma B)}2^{kp/p^*}.
\end{equation}

To estimate the second term on the right-hand side of \eqref{eq27},
we argue as in the proofs of \eqref{eq10} and \eqref{eq15} when
$s>1$, using H\"older's inequality followed by a geometric series
estimate. When $0<s\leq1$, we use the argument in \eqref{eq39}.
In either case, we obtain
\begin{equation}
\label{eq:local-subcritical-increments}
\begin{aligned}
&2^{1+1/s}b^{-1/s}\mu(B)^{-1/p}
 \Vert g\Vert_{L^p(\sigma B)}
 \sum_{j=k_o+1}^k2^jm_{j-1}^{1/s}\\
&\qquad\leq
 C_{s,p}b^{-1/s}\mu(B)^{-1/p^*}
 \Vert g\Vert_{L^p(\sigma B)}2^{kp/p^*}.
\end{aligned}
\end{equation}
As in the proof of \eqref{eq15}, for $1\leq p<s$, the constant $C_{s,p}$ in this bound
satisfies
\begin{equation}
\label{eq:local-subcritical-increments - constant}
 C_{s,p}\leq
 32\left(\frac{s-1}{s-p}\right)^{1/s'}.
\end{equation}

It remains to estimate the term $a_{k_o}$ in \eqref{eq27}.
Note that, in the proof of Theorem~\ref{T1}, $k_o$ depends only on $p$,
whereas here its choice also involves the ratio $\mu(B)/(bL^s)$.
To express the bound for $a_{k_o}$ in \eqref{eq25} in a form
that can be combined with \eqref{eq:local-subcritical-increments},
we use the lower bound in \eqref{eq24}, which gives
\[
 b^{-1/s}\mu(B)^{-1/p^*}2^{k_op/p^*}
 \geq \Lambda_{s,p}^{p/p^*}b^{-1/p}L^{1-s/p}.
\]
Combining this with \eqref{eq25}, and using $k\geq k_o$ and 
$1-p/p^*=p/s$%$\Lambda_{s,p}>1$
, yields
\[
 a_{k_o}\leq 16\Lambda_{s,p}^{p/s}\frac RL\,b^{-1/s}\mu(B)^{-1/p^*}
 \Vert g\Vert_{L^p(\sigma B)}2^{kp/p^*}.
\]
This proves \eqref{eq28}. 
For $1\leq p<s$, we have
\[
 \Lambda_{s,p}^{p/s}
 =\frac{2^{1/s}}{1-2^{-p/s}}
 \leq\frac{2^{1/s}}{1-2^{-1/s}}.
\]
Consequently, the estimate for $a_{k_o}$ holds with some constant $C_s$
that only depends on $s$.
Combining this estimate with
\eqref{eq:local-subcritical-increments} and
\eqref{eq:local-subcritical-increments - constant}
proves \eqref{eq29}.

We now complete the proof of the subcritical embedding. Set
\begin{equation}
\label{eq:F sets}
F_{k_o}=B\cap E_{k_o},
 \qquad
 F_k=B\cap(E_k\setminus E_{k-1}),\quad k>k_o.
\end{equation}
Up to a null set, we have
\[
B=\bigcup_{k\geq k_o}F_k.
\]
 As in the proof of
\eqref{eq11},
\[
 \sum_{k>k_o}2^{kp}\mu(F_k)
 \leq 2^p\mu(B)\Vert g\Vert_{L^p(\sigma B)}^{-p}
       \int_B g^p\,d\mu
 \leq 2^p\mu(B).
\]
For the $k_o$ level, \eqref{eq24} gives instead
\[
 2^{k_op}\mu(F_{k_o})
 \leq 2^p\Lambda_{s,p}^p\frac{\mu(B)^2}{bL^s}.
\]
Together with \eqref{eq23}, these estimates imply
\begin{equation}
\label{eq30}
 \sum_{k\geq k_o}2^{kp}\mu(F_k)
 \leq 2^{p+1}\Lambda_{s,p}^p\frac{\mu(B)^2}{bL^s}.
\end{equation}
As before, when $1\leq p<s$, we can bound the constant $2^{p+1}\Lambda_{s,p}^p$ by some $C_s$.

Summing \eqref{eq28} to the power $p^*$ over the sets $F_k$, exactly as
after \eqref{eq11}, and then using \eqref{eq30}, gives
\[
 \fint_B |u-\gamma|^{p^*}\,d\mu
 \leq C_{s,p}^{p^*}2^{p+1}\Lambda_{s,p}^p\left(\frac RL\right)^{p^*}
 b^{-p^*/s-1}L^{-s}\Vert g\Vert_{L^p(\sigma B)}^{p^*},
\]
where $C_{s,p}$ is the constant in \eqref{eq28}.
Since $1/s+1/p^*=1/p$ and $-s/p^*=1-s/p$, it follows that
\[
 \left(\fint_B|u-\gamma|^{p^*}\,d\mu\right)^{1/p^*}
 \leq C_{s,p}\left(2^{p+1}\Lambda_{s,p}^p\right)^{1/p^*}\frac RL\,b^{-1/p}L^{1-s/p}
 \Vert g\Vert_{L^p(\sigma B)}.
\]
This proves the first assertion in the subcritical case.

For $1\leq p<s$, we repeat this calculation using
\eqref{eq29} in place of \eqref{eq28}.
The factor coming from \eqref{eq30} satisfies
\[
 \left(2^{p+1}\Lambda_{s,p}^p\right)^{1/p^*}
 \leq
 \left(\frac{2^{s+2}}{(1-2^{-1/s})^s}\right)^{1/p^*}
 \leq
 \left(\frac{2^{s+2}}{(1-2^{-1/s})^s}\right)^{1/s'},
\]
where the last inequality uses $p^*\geq s'$.
Thus this factor is bounded independently of $p$ in the
range $1\leq p<s$, and we obtain
\[
 \left(\fint_B|u-\gamma|^{p^*}\,d\mu\right)^{1/p^*}
 \leq C_s\left(
 \frac RL+\left(\frac{s-1}{s-p}\right)^{1/s'}
 \right)b^{-1/p}L^{1-s/p}
 \Vert g\Vert_{L^p(\sigma B)}.
\]
Finally, whenever $p^*\geq1$, the same averaging argument
as in Theorem~\ref{T1} replaces $\gamma$ by $u_B$ at the
cost of a factor of $2$. This completes the proof of the theorem
in the subcritical case.

\medskip
\noindent
{\sc The critical case $p=s>1$.}
Write $s'=s/(s-1)$. Repeating the H\"older estimate leading to
\eqref{eq12}, now starting at the local level $k_o$ instead of
at $1$, and using \eqref{eq22}, \eqref{eq25}, and \eqref{eq27},
we obtain
\begin{equation}
\label{eq31}
 \sup_{E_k\cap B}|u-\gamma|
 \leq C_sb^{-1/s}\Vert g\Vert_{L^s(\sigma B)}
 \left(\frac RL+(k-k_o)^{1/s'}\right),
 \qquad k>k_o.
\end{equation}
The same estimate, with $k=k_o$, follows from \eqref{eq25}.

With the sets $F_k$ defined as in \eqref{eq:F sets}, observe that
since $k_o\geq1$, \eqref{eq21} gives
\begin{equation}
\label{eq32}
 \frac{\mu(F_k)}{\mu(B)}
 \leq2^{-s(k-1)}
 \leq2^{-s(k-k_o)},
 \qquad k>k_o.
\end{equation}
By \eqref{eq31} and $L/R\leq1$, we have
\[
 \left(
 \frac LR\frac{b^{1/s}|u-\gamma|}
 {\Vert g\Vert_{L^s(\sigma B)}}
 \right)^{s'}
 \leq C_s(1+k-k_o)
 \qquad\text{on }F_k,\quad k\geq k_o.
\]
As in the argument following \eqref{eq12}, choosing $c_s>0$
sufficiently small and summing over the sets $F_k$ yields
\begin{equation}
\label{eq33}
 \fint_B
 \exp\!\left[
 c_s\left(
 \frac LR\frac{b^{1/s}|u-\gamma|}
 {\Vert g\Vert_{L^s(\sigma B)}}
 \right)^{s'}
 \right]d\mu
 \leq C_s.
\end{equation}
Indeed, for sufficiently small $c_s$, the integrand in
\eqref{eq33} is at most $C_s2^{s(k-k_o)/2}$ on $F_k$,
$k>k_o$, and is bounded by $C_s$ on $F_{k_o}$ by
\eqref{eq25}. Together with \eqref{eq32}, these bounds
show that the integral average in \eqref{eq33} is at most
\[
 C_s\left(
 1+\sum_{k=k_o+1}^{\infty}
 2^{-s(k-k_o)}2^{s(k-k_o)/2}
 \right)
 =\frac{C_s}{1-2^{-s/2}}.
\]
In particular, $u\in L^1(B)$. The same argument used in the proof of Lemma~\ref{T6} with $X$ replaced by $B$ allows us to replace $\gamma$ by $u_B$ in \eqref{eq33}, after replacing
$c_s$ by $c_s/2^{s'}$. This proves the theorem in the case
$p=s>1$.

\medskip
\noindent
{\sc A pointwise estimate for the remaining cases.}
We first establish a pointwise estimate that will be used to prove uniform
continuity when $p=s\leq1$ and when $p>s$, as well as the H\"older
estimate when $p>s$.

Fix $k\geq k_o$ and $x\in B\setminus N$. 
We now choose a point $x_k\in E_k$ as follows:
If $x\in E_k$, put $x_k=x$.
Otherwise, since $\bigcup_{m\geq k_o}E_m=\sigma B\setminus N$,
we may choose the smallest $m>k$ such that $x\in E_m$. Starting with $x_m=x$,
apply the chain construction down to level $k$, choosing
\[
 x_{j-1}\in B(x_j,r_j)\cap E_{j-1},
 \qquad j=m,m-1,\ldots,k+1,
\]
which gives us a point $x_k\in E_k$. Note that the estimates along the chain give
\[
 d(x,x_k)\leq\sum_{j=k+1}^m r_j
\]
and
\[
 |u(x)-u(x_k)|
 \leq 2\mu(B)^{-1/p}\Vert g\Vert_{L^p(\sigma B)}
       \sum_{j=k+1}^m2^jr_j.
\]
For $x,y\in B\setminus N$, choose $x_k,y_k\in E_k$ in this way.
Since $u|_{E_k}$ is Lipschitz with constant at most
$2^{k+1}\mu(B)^{-1/p}\Vert g\Vert_{L^p(\sigma B)}$, the triangle
inequality gives
\[
\begin{split}
 |u(x)-u(y)|
 &\leq |u(x)-u(x_k)|+|u(y)-u(y_k)|\\
 &\quad+2^{k+1}\mu(B)^{-1/p}\Vert g\Vert_{L^p(\sigma B)}
 \bigl(d(x,x_k)+d(x,y)+d(y,y_k)\bigr).
\end{split}
\]
Using the preceding bounds, together with
$2^k\sum_{j=k+1}^{\infty}r_j\leq\sum_{j=k+1}^{\infty}2^jr_j$, we obtain
\begin{equation}
\label{eq:T2-two-point}
\begin{split}
 |u(x)-u(y)|
 &\leq C_s b^{-1/s}\mu(B)^{-1/p}\Vert g\Vert_{L^p(\sigma B)}
       \sum_{j=k+1}^{\infty}2^jm_{j-1}^{1/s}\\
 &\quad+2^{k+1}\mu(B)^{-1/p}\Vert g\Vert_{L^p(\sigma B)}d(x,y).
\end{split}
\end{equation}
If the series tail in \eqref{eq:T2-two-point} tends to zero as
$k\to\infty$, then $u|_{B\setminus N}$ is uniformly continuous.
Indeed, if this is the case then, for each $\varepsilon>0$, we can choose $k$ so that the first term is less than $\varepsilon/2$, and then take $d(x,y)$ small enough so that the second term is less than $\varepsilon/2$. Since $B\setminus N$ is dense in $B$, this gives a unique uniformly continuous representative on $B$, which we continue to denote by $u$, satisfying \eqref{eq:T2-two-point} for all $x,y\in B$.

\medskip
\noindent
{\sc The critical case $p=s$, $0<s\leq1$.}
As in the corresponding case of Theorem~\ref{T1}, we use
$\ell^s\subset\ell^1$. Together with \eqref{eq22}, this gives
\begin{equation}
\label{eq:T2-critical-tail}
 \sum_{j=k+1}^{\infty}2^jm_{j-1}^{1/s}
 \leq
 \left(\sum_{j=k+1}^{\infty}2^{js}m_{j-1}\right)^{1/s}
 \longrightarrow0
 \qquad\text{as }k\to\infty.
\end{equation}
Thus the preceding argument gives a uniformly continuous
representative on $B$.
Moreover, \eqref{eq22} bounds the right-hand side of
\eqref{eq:T2-critical-tail} at $k=k_o$ by $C_s\mu(B)^{1/s}$.
Combining this bound with \eqref{eq25} and \eqref{eq27}, and using
$R/L\geq1$, yields
\begin{equation}
\label{eq34}
 \sup_{E_k\cap B}|u-\gamma|
 \leq C_s\frac RL\,b^{-1/s}\Vert g\Vert_{L^s(\sigma B)},
 \qquad k\geq k_o.
\end{equation}
Since $B\setminus N=\bigcup_{k\geq k_o}(E_k\cap B)$,
continuity extends this bound to all of $B$. Consequently,
\[
 \operatorname{osc}_B u
 \leq 2\sup_B|u-\gamma|
 \leq C_s\frac RL\,b^{-1/s}\Vert g\Vert_{L^s(\sigma B)}.
\]
This completes the critical case.

\medskip
\noindent
{\sc The supercritical case $p>s$.}
By \eqref{eq21}, for $k\geq k_o$,
\begin{equation}
\label{eq37}
\begin{split}
 \sum_{j=k+1}^{\infty}2^jm_{j-1}^{1/s}
 &\leq 2^{p/s}\mu(B)^{1/s}
        \sum_{j=k+1}^{\infty}2^{j(1-p/s)}\\
 &\leq C_{s,p}\mu(B)^{1/s}2^{k(1-p/s)}.
\end{split}
\end{equation}
Since $1-p/s<0$, these tails tend to zero. Hence the argument
following \eqref{eq:T2-two-point} again gives a continuous
representative of $u$ on $B$, which we continue to denote by $u$.

As in the supercritical case of Theorem~\ref{T1}, we now sum the
increments. Equations \eqref{eq27} and \eqref{eq37} give
\[
 |u(x)-\gamma|\leq a_{k_o}
 +C_{s,p}b^{-1/s}\mu(B)^{1/s-1/p}
 \Vert g\Vert_{L^p(\sigma B)}2^{k_o(1-p/s)},
 \qquad x\in B\setminus N.
\]
The initial term is bounded by \eqref{eq25}. Since $1-p/s<0$,
the lower bound in \eqref{eq24} yields
\[
 b^{-1/s}\mu(B)^{1/s-1/p}2^{k_o(1-p/s)}
 \leq \Lambda_{s,p}^{1-p/s}b^{-1/p}L^{1-s/p}.
\]
Using $R/L\geq1$ and extending by continuity, we obtain
\begin{equation}
\label{eq38}
 |u-\gamma|
 \leq C_{s,p}\frac RL\,b^{-1/p}L^{1-s/p}
 \Vert g\Vert_{L^p(\sigma B)}
 \qquad\text{on }B.
\end{equation}
The stated oscillation estimate now follows from
$\operatorname{osc}_B u\leq2\sup_B|u-\gamma|$.

We now prove the H\"older estimate.
Let $x,y\in B$ be distinct, and set
$c_{s,p}=(2\Lambda_{s,p})^{-p/s}$.
If $d(x,y)\leq c_{s,p}L$, \eqref{eq24} implies that there exists
$k\geq k_o$ such that
\begin{equation}
\label{eq:T2-holder-level}
 2^k\leq b^{-1/p}\mu(B)^{1/p}d(x,y)^{-s/p}<2^{k+1}.
\end{equation}
Using \eqref{eq37} in \eqref{eq:T2-two-point}, we obtain
\begin{equation}
\label{eq:T2-holder-two-terms}
\begin{split}
 |u(x)-u(y)|
 &\leq C_{s,p}b^{-1/s}\mu(B)^{1/s-1/p}
       \Vert g\Vert_{L^p(\sigma B)}2^{k(1-p/s)}\\
 &\quad+2^{k+1}\mu(B)^{-1/p}
       \Vert g\Vert_{L^p(\sigma B)}d(x,y).
\end{split}
\end{equation}
The right-hand inequality in \eqref{eq:T2-holder-level} bounds the
first term in \eqref{eq:T2-holder-two-terms}, since $1-p/s<0$,
while the left-hand inequality bounds the second. Consequently,
\[
 |u(x)-u(y)|
 \leq C_{s,p}b^{-1/p}d(x,y)^{1-s/p}
       \Vert g\Vert_{L^p(\sigma B)}.
\]
If $d(x,y)>c_{s,p}L$, \eqref{eq38} immediately gives
\[
\begin{split}
 |u(x)-u(y)|
 &\leq C_{s,p}\frac RL\,b^{-1/p}L^{1-s/p}
       \Vert g\Vert_{L^p(\sigma B)}\\
 &\leq C_{s,p}\frac RL\,b^{-1/p}d(x,y)^{1-s/p}
       \Vert g\Vert_{L^p(\sigma B)}.
\end{split}
\]
Combining these last two estimates and using $R/L\geq1$ proves the desired
H\"older estimate, and the proof of Theorem~\ref{T2} is complete.
\end{proof}

\section{Poincar\'e inequalities below the critical exponent}

\begin{proof}[Proof of Theorem~\ref{T9}]
Fix a ball $B=B(z,R)$. It is understood that
$\Vert g\Vert_{L^s(10\sigma B)}>0$.

For a locally integrable function $v$ and $T>0$ let
\[
Mv(x)=\sup_{r>0}\fint_{B(x,r)}|v|\,d\mu
\qquad
\text{and}
\qquad
 M_Tv(x)=\sup_{0<r<T}\fint_{B(x,r)}|v|\,d\mu
\]
be the Hardy--Littlewood maximal functions. By \cite[Theorem~3.2]{SMP}, the $(1,p)$-Poincar\'e inequality implies that
\begin{equation}
\label{eq51}
 |u(x)-u(y)|
 \leq C d(x,y)
 \left[
 \big(M_{2\sigma d(x,y)}g^p(x)\big)^{1/p}
 +\big(M_{2\sigma d(x,y)}g^p(y)\big)^{1/p}
 \right]
\end{equation}
for almost every $x,y\in X$ with $x\neq y$.

We apply \eqref{eq51} to points $x,y\in2B$. Since
$d(x,y)\leq 4R$,
every ball occurring in $M_{2\sigma d(x,y)}g^p(x)$ has radius less than $8\sigma R$. Moreover, $x\in2B$, so such a ball is contained in $10\sigma B$. Hence, if
$h=C\big(M(g^p\chi_{10\sigma B})\big)^{1/p}$,
then $h|_{2B}\in D(u|_{2B})$. 

Since $p<s$, we have $s/p>1$. The maximal theorem on doubling spaces, \cite[Theorem~14.13]{SMP}, therefore gives
\begin{equation}
\label{eq52}
\Vert h\Vert_{L^s(2B)}
\leq C
\left\Vert M(g^p\chi_{10\sigma B})\right\Vert_{L^{s/p}(X)}^{1/p}
\leq C
\left\Vert g^p\chi_{10\sigma B}\right\Vert_{L^{s/p}(X)}^{1/p}
 =C\Vert g\Vert_{L^s(10\sigma B)}.
\end{equation}

Let $x\in 2B$ and $0<r\leq 2R$. Then \eqref{eq18} yields
\[
\mu(B(x,r))\geq C\mu(2B)2^{-s}R^{-s}r^s\geq\underbrace{C\mu(B)2^{-s}R^{-s}}_{b}r^s,
\quad
x\in 2B,
\quad
0<r\leq 2R,
\]
which is \eqref{eq20} with $R_o=2R$. Since
\[
L=\min\{2R,(2-1)R\}=R,
\qquad
\frac{L}{R}=1,
\]
Theorem~\ref{T2} yields
\[
\fint_B
\exp\left[
 c_s\left(
 (C\mu(B)2^{-s}R^{-s})^{1/s}
 \frac{|u-u_B|}{\Vert h\Vert_{L^s(2B)}}
 \right)^{\frac{s}{s-1}}
 \right]d\mu
\leq C_s
\]
and \eqref{eq50} follows from \eqref{eq52}.
%It remains to verify the lower bound for the measure that is needed in Theorem~\ref{T2}. Let $x\in2B$ and $0<r\leq2R$. Apply \eqref{eq18} with the ball $2B$, whose radius is $2R$. We obtain
%\[
% \mu(B(x,r))
% \geq C\mu(2B)\left(\frac{r}{2R}\right)^s
% \geq C2^{-s}\frac{\mu(B)}{R^s}r^s.
%\]
%Thus the hypothesis \eqref{eq20} of Theorem~\ref{T2} holds on $2B$ with
%\[
% R_o=2R
% \qquad\text{and}\qquad
% b=C2^{-s}\frac{\mu(B)}{R^s}.
%\]
%We now apply part~(2) of Theorem~\ref{T2} to the ball $B$, with the dilation parameter in that theorem equal to $2$, and with the Haj\l asz gradient $h$. In this case
%\[
% L=\min\{R_o,(2-1)R\}=R.
%\]
%Consequently,
%\[
% \fint_B
% \exp\left[
% c_s\left(
% \frac{b^{1/s}|u-u_B|}
% {\Vert h\Vert_{L^s(2B)}}
% \right)^{s'}
% \right]d\mu
% \leq C_s.
%\]
%Since
%\[
% b^{1/s}=C^{1/s}2^{-1}\frac{\mu(B)^{1/s}}R,
%\]
%the estimate in \eqref{eq52} gives \eqref{eq50}, after changing the constants. This completes the proof.
\end{proof}

\section{Counterexample}

\begin{proof}[Proof of Theorem~\ref{T3}]
Fix $s\in(1,\infty)$. We construct first the metric measure space, then the pair $(u,g)$, and finally verify the Poincar\'e inequality and \eqref{eq40}.

\noindent
{\sc The metric measure space.}
Let
\[
C_0=[0,1],
\quad
C_1=[0,1/3]\cup[2/3,1],
\quad 
\text{and}
\quad
C_j=\frac13 C_{j-1}\cup\left(\frac23+\frac13 C_{j-1}\right),\quad j\geq2.
\]
Then $X=\bigcap_{j=0}^\infty C_j$ is the standard ternary Cantor set.

It is well known that $X$ has Hausdorff dimension $\alpha:=\frac{\log 2}{\log 3}$, and that the space is Ahlfors $\alpha$-regular. If we wrinkle the metric and replace it by $|x-y|^{\alpha/s}$, the space will be Ahlfors $s$-regular. However, we want to work with a different metric $d$ that is bi-Lipschitz equivalent to  $|x-y|^{\alpha/s}$, see \eqref{eq58}. This new metric has additional properties that will greatly simplify computations. 
The metric is an ultrametric, it only has countably many distinct values, and the balls have dyadic structure, see Lemma~\ref{T10}. The dyadic structure, \eqref{eq42}, implies that we only have countably many balls of simple structure and hence checking the $(1,s)$-Poincar\'e inequality is  easy.

Our construction is very simple and although it is known in the vast literature on ultrametric Cantor sets we couldn't find that simple description. 
We would like to popularize it since it really simplifies calculations. This is why we present this construction with details.
We learned the construction from  \cite[Theorem~6.16]{farsi}, but this is not the original source.

For $x\in X$, let $I_j(x)$ denote the unique connected component of $C_j$ containing $x$. Note that the Euclidean length of $I_j(x)$ is $3^{-j}$. If $x,y\in X$, $x\neq y$, we define
\[
 N(x,y)=\min\{j\geq1:I_j(x)\neq I_j(y)\}.
\]
Clearly $3^{-j}\leq |x-y|\leq 3^{-j+1}$ for $j=N(x,y)$. Let
\[
d(x,y)=
\begin{cases}
0,&x=y,\\
\lambda^{N(x,y)-1},&x\neq y,
\end{cases}
\qquad
\text{where}
\qquad
\lambda=2^{-1/s}.
\]
It is easy to see that
\begin{equation}
\label{eq58}
|x-y|^{\alpha/s}\leq d(x,y)\leq 2^{1/s}|x-y|^{\alpha/s}.
\end{equation}

Recall that an ultrametric is a metric that satisfies the inequality
\begin{equation}
\label{eq41}
d(x,z)\leq \max\{d(x,y),d(y,z)\}
\qquad
\text{for all } x,y,z\in X.
\end{equation}
\begin{lemma}
\label{T10}
$(X,d)$ is a compact ultrametric space with $\diam X=1$.
The metric $d$ has only countably many nonzero values
\begin{equation}
\label{eq59}
1,\ \lambda,\ \lambda^2,\ \lambda^3,\ \dots
\end{equation}
For every $x\in X$ and $0<r\leq 1$
\begin{equation}
\label{eq42}
B(x,r)=I_j(x)\cap X
\qquad
\text{and}
\qquad
\diam B(x,r)=\lambda^j,
\end{equation}
where $j\geq 1$ is such that $\lambda^j<r\leq \lambda^{j-1}$.
Moreover every ball is clopen and every nonempty set $V\subset X$ of positive diameter  is contained in a ball of the same diameter.
\end{lemma}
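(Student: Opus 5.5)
The plan is to work directly from the combinatorics of the intervals $I_j(x)$ and the function $N(x,y)$; every claim then reduces to the nested structure of the sets $C_j$. Throughout I use that $d(x,y)=\lambda^{N(x,y)-1}$ with $0<\lambda<1$.

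\emph{Metric and ultrametric inequality.} For distinct $x,y$ we have $N(x,y)\ge 1$, so $d(x,y)\le\lambda^0=1$. Equality holds for $x=0$, $y=1$, since these already lie in different components of $C_1$; hence $\diam X=1$. Symmetry is clear, and $d(x,y)=0$ exactly when $x=y$, because two distinct points of $X$ must eventually lie in different components of $C_j$ (the lengths $3^{-j}$ tend to $0$).

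For \eqref{eq41}, let $x,y,z$ be distinct (otherwise it is trivial) and put $m=\min\{N(x,y),N(y,z)\}$. For every $j<m$ we have $I_j(x)=I_j(y)=I_j(z)$, so $N(x,z)\ge m$. Since $\lambda<1$, this gives $d(x,z)\le\lambda^{m-1}=\max\{d(x,y),d(y,z)\}$. The same computation shows that the nonzero values of $d$ lie in the list \eqref{eq59}. Each value actually occurs: for $j\ge 1$, pick $x$ with $I_j(x)\neq I_j(y)$ but $I_{j-1}(x)=I_{j-1}(y)$, i.e. $x,y$ in the two children of a level-$(j-1)$ interval; then $d(x,y)=\lambda^{j-1}$.

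Compactness follows from \eqref{eq58}. That estimate shows $d$ induces the Euclidean topology on $X$, which is compact. To verify \eqref{eq58} itself, I will use $3^{-N}\le|x-y|\le3^{-N+1}$ together with $3^{-\alpha}=1/2$, which gives
\[
|x-y|^{\alpha/s}\in[2^{-N/s},2^{(-N+1)/s}],
\]
while $d=2^{-(N-1)/s}$.

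\emph{Balls.} Fix $x$, $0<r\le1$, and $j\ge1$ with $\lambda^j<r\le\lambda^{j-1}$. A point $y\neq x$ lies in $B(x,r)$ iff $\lambda^{N(x,y)-1}<r$. Because $r$ sits in this interval, that condition is equivalent to $N(x,y)-1\ge j$, i.e. $N(x,y)>j$, i.e. $I_j(y)=I_j(x)$. This proves $B(x,r)=I_j(x)\cap X$.

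For the diameter, $I_j(x)\cap X$ contains points from both children at level $j+1$. Such points are at distance $\lambda^{j}$, and no two points of the set are farther apart, since $N\ge j+1$ for any two of its points. Hence $\diam B(x,r)=\lambda^j$.

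Balls are clopen for two reasons. First, open balls are open. Second, $I_j(x)\cap X$ is closed, being the intersection of a closed interval with $X$. Alternatively, in any ultrametric space the complement of a ball is a union of balls.

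\emph{Containment claim.} Let $V$ have positive diameter. Since $d$ takes values in the discrete set \eqref{eq59}, $\diam V=\lambda^{k}$ for some $k\ge0$. Pick $x\in V$. Then every $y\in V$ satisfies $d(x,y)\le\lambda^k$, which means $N(x,y)\ge k+1$, i.e. $y\in I_k(x)$, with the convention $I_0(x)=[0,1]$ for $k=0$. So $V\subset I_k(x)\cap X$, and the latter set has diameter $\lambda^k$ by the computation above. It equals a ball: $B(x,r)$ for $\lambda^{k}<r\le\lambda^{k-1}$ when $k\ge1$, and $X=B(x,2)$ when $k=0$.

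The only mildly delicate step is the bookkeeping of $\sup$ versus attained maximum in the diameter computation, i.e. making sure the value $\lambda^j$ is attained inside $I_j(x)\cap X$. This uses the fact that both children of each interval contain points of the Cantor set.
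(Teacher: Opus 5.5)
Your proof is correct and takes essentially the same approach as the paper: the ultrametric inequality via $N(x,z)\ge\min\{N(x,y),N(y,z)\}$, compactness via \eqref{eq58}, the identity $B(x,r)=I_j(x)\cap X$ by comparing $\lambda^{N(x,y)-1}$ with $r\in(\lambda^j,\lambda^{j-1}]$, and the containment claim by writing $\diam V=\lambda^k$ and placing $V$ inside $I_k(x)\cap X$ for any $x\in V$. The differences are only cosmetic: you argue the ultrametric inequality directly rather than by contradiction, you check \eqref{eq58} explicitly, and you use closedness of $I_j(x)\cap X$ where the paper uses its positive distance from the complement.
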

\begin{proof}
Observe that \eqref{eq59} follows from the definition of $d$.
Also, $N(0,1)=1$, so $\diam X=d(0,1)=1$.
Now we will prove that $d$ is an ultrametric.
If $x,y,z\in X$ are pairwise distinct, then
\begin{equation}
\label{eq60}
 N(x,z)\geq \min\{N(x,y),N(y,z)\}.
\end{equation}
Indeed, suppose by way of contradiction that $N(x,z)=j$, but $N(x,y)\geq j+1$ and $N(y,z)\geq j+1$. 
Then $x$ and $y$ belong to the same component of $C_j$, and so do $y$ and $z$. This implies that $x$ and $z$ belong to that same component of $C_j$ and hence $N(x,z)\geq j+1$ which is a contradiction.
This proves \eqref{eq60}. Since $\lambda\in (0,1)$, \eqref{eq41} follows.

In particular $d$ is a metric and \eqref{eq58} shows that $(X,d)$ is homeomorphic to the standard ternary Cantor set so it is compact.

To prove \eqref{eq42}, let $j\geq 1$ be such that $\lambda^j<r\leq\lambda^{j-1}$. We will show that $B(x,r)=I_j(x)\cap X$. If $y\in I_j(x)\cap X$ and $y\neq x$, then $N(x,y)\geq j+1$ and hence $d(x,y)\leq \lambda^j<r$, so $I_j(x)\cap X\subset B(x,r)$.
Conversely, if $y\in B(x,r)$ and $y\neq x$, then $d(x,y)<r\leq \lambda^{j-1}$ and hence $d(x,y)\leq\lambda^j$, because of \eqref{eq59}. This implies that $N(x,y)\geq j+1$, so $y\in I_j(x)$ and hence $B(x,r)\subset I_j(x)\cap X$. This completes the proof that $B(x,r)= I_j(x)\cap X$. The diameter of $I_j(x)\cap X$ equals the distance of its endpoints which equals $\lambda^j$. 

Clearly, if $B(x,r)=X$, it is clopen. If $B(x,r)=I_j(x)\cap X$, $j\geq 1$, then it is at a positive distance from its complement and hence it is clopen too.

Finally, by \eqref{eq59}, $\diam V=\lambda^j$. If $j=0$, $V\subset X$, and $X$ is a ball of diameter $1$. If $j\geq 1$ and $\diam V=\lambda^j<r\leq \lambda^{j-1}$, then
for any $x\in V$ we have
$V\subset B(x,r)=I_j(x)\cap X$ and the ball $B(x,r)$ has diameter $\lambda^j=\diam V$ by \eqref{eq42}.
\end{proof}

Next, we equip $X$ with the measure $\mu$ which is the $s$-Hausdorff measure, associated to the metric $d$. In our definition of the Hausdorff measure we use normalization constant equal to $1$ instead of the usual constant $\omega_s/2^s$.
Note also that since $X$ has no isolated points we can assume that all sets in the coverings in the definition of the Hausdorff measure have positive diameter.

\begin{lemma}
\label{hausd1}
For any $j \geq 0$ and any
$ I_{j}(x) \cap X  $ we have
$$
\mu(I_{j}(x) \cap X) = 2^{-j}.
$$
\end{lemma}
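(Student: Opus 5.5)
We prove the two inequalities $\mu(I_j(x)\cap X)\le 2^{-j}$ and $\mu(I_j(x)\cap X)\ge 2^{-j}$ separately. The key fact is \eqref{eq42}: every set of positive diameter $\lambda^k$ lies in a cylinder $I_k(y)\cap X$ of the same diameter. We also use $(\lambda^k)^s=2^{-k}$. Write $Q=I_j(x)\cap X$.

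\emph{Upper bound.} For $k\ge j$, the set $Q$ is the disjoint union of $2^{k-j}$ cylinders $I_k(y)\cap X$. Each of them has diameter $\lambda^k\to0$ by \eqref{eq42}. Using these cylinders as a $\delta$-cover in the definition of $\mathcal H^s$ (normalization constant $1$) gives
\[
\mathcal H^s_\delta(Q)\le 2^{k-j}(\lambda^k)^s=2^{k-j}2^{-k}=2^{-j}.
\]
Letting $\delta\to0$ yields $\mu(Q)\le 2^{-j}$.

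\emph{Lower bound.} Let $\{V_i\}$ be any cover of $Q$ by sets of positive diameter; as remarked before the lemma, this suffices since $X$ has no isolated points. By Lemma~\ref{T10}, each $V_i\cap X$ lies in a cylinder $J_i$ with $\diam J_i=\diam V_i$, where $J_i=X$ or $J_i=I_{k_i}(y_i)\cap X$. Hence $\sum(\diam V_i)^s=\sum(\diam J_i)^s$, and it suffices to show that every cover of $Q$ by cylinders satisfies $\sum(\diam J_i)^s\ge 2^{-j}$.

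Each cylinder is clopen and $Q$ is compact, so we may pass to a finite subcover. Define the \emph{weight} of a cylinder of level $k$ as $2^{-k}=(\lambda^k)^s$. Weights are additive under splitting: a cylinder of level $k$ is the union of its two children of level $k+1$, with $2^{-k}=2\cdot2^{-(k+1)}$. Cylinders containing $Q$ have weight at least $2^{-j}$, so those cases are trivial. Otherwise, discard cylinders disjoint from $Q$; every remaining one is contained in $Q$, by the nested structure of cylinders. Let $K$ be the maximal level among them, and split each remaining cylinder into its level-$K$ descendants. This does not change the total weight. The resulting level-$K$ cylinders must contain all $2^{K-j}$ level-$K$ cylinders inside $Q$, since they cover $Q$ and distinct level-$K$ cylinders are disjoint and nonempty. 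Hence the total weight is at least $2^{K-j}2^{-K}=2^{-j}$, so $\mathcal H^s_\delta(Q)\ge 2^{-j}$ for every $\delta$.

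The main obstacle is the reduction from arbitrary covering sets to cylinders with the same diameter. Lemma~\ref{T10}'s final statement is exactly what gives this reduction. Compactness and clopenness then reduce the problem to a finite combinatorial count.
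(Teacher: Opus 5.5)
Your proof is correct and follows essentially the same route as the paper: the upper bound comes from covering by the $2^{k-j}$ level-$k$ cylinders, and the lower bound replaces each $V_i$ by a ball of the same diameter (Lemma~\ref{T10}), passes to a finite subcover, and refines everything to the finest level $K$ to count. The only differences are cosmetic. You count level-$K$ cylinders with multiplicity instead of first making the subcover disjoint, and you treat a cylinder containing $Q$ as a trivial case rather than choosing $\delta<\lambda^j$.
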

\begin{remark}
Let $x\in X$ and $0<r\leq1=\diam X$. Choose $j\geq1$ such that
$\lambda^j<r\leq\lambda^{j-1}$. Then $B(x,r)=I_j(x)\cap X$ and,
since $\lambda^s=1/2$,
\[
\frac{1}{2} r^s\leq\mu(B(x,r))=2^{-j}=(\lambda^j)^s<r^s.
\]
Thus $\mu$ is Ahlfors $s$-regular.
\end{remark}
\begin{proof}
Clearly, for any $m>j$, $I_j(x)$ contains exactly $2^{m-j}$ connected components $\{ I_{j,k}\}_{k=1}^{2^{m-j}}$  of $C_m$. Since by \eqref{eq42}, the diameter of $I_{j,k}\cap X$ equals $\lambda^m$, the definition of the Hausdorff measure yields that
\[
\mu(I_j(x)\cap X)\leq\lim_{m\to\infty} 2^{m-j}(\lambda^m)^s=2^{-j}.
\]
It remains to show the converse inequality. To prove it, let $\delta\in (0,\lambda^j)$ and take a countable covering
\[
I_j(x)\cap X=\bigcup_{i\in I} V_i,
\qquad
0<\diam V_i<\delta.
\]
Since by Lemma~\ref{T10}, $V_i$ is contained in an open ball of the same diameter, we may assume that the sets $V_i$ are open balls. Then from this open covering of the compact set $I_j(x)\cap X$ we may select a finite sub-covering. Finally, since balls are either disjoint or one is contained in another, we may assume that the balls in the covering are disjoint.
Thus
\begin{equation}
\label{eq61}
I_j(x)\cap X=\bigcup_{k=1}^N I_{jk}\cap X
\end{equation}
and clearly
\[
\sum_{i\in I} (\diam V_i)^s\geq \sum_{k=1}^N \diam (I_{jk}\cap X)^s
\]
Let $\diam(I_{jk}\cap X)=\lambda^{m_k}$, $m_k>j$ and let $m=\max_k m_k$. That is $\lambda^m$ is the smallest diameter of balls in the covering \eqref{eq61}.
Each ball $I_{jk}\cap X$ is the union of $2^{m-m_k}$ disjoint balls of diameter $\lambda^m$ i.e., balls of the form $I_m(x_{jk\ell})\cap X$ for some $x_{jk\ell}\in I_{jk}\cap X$,
\[
I_{jk}\cap X=\bigcup_{\ell=1}^{2^{m-m_k}} I_m(x_{jk\ell})\cap X
\]
and note that
\[
\sum_{\ell=1}^{2^{m-m_k}}\diam (I_m(x_{jk\ell})\cap X)^s
=2^{m-m_k}(\lambda^m)^s=(\lambda^{m_k})^s=\diam(I_{jk}\cap X)^s.
\]
Note also that
\[
I_j(x)\cap X=\bigcup_{k=1}^N \bigcup_{\ell=1}^{2^{m-m_k}} I_m(x_{jk\ell})\cap X
\]
is a covering of $I_j(x)\cap X$ by disjoint balls of $m$th generation, so on the right-hand side we have all $2^{m-j}$ balls of $m$th generation contained in $I_j(x)\cap X$, that is
\[
\sum_{k=1}^N 2^{m-m_k}=2^{m-j}
\qquad
\text{so}
\qquad
\sum_{k=1}^N 2^{-m_k}=2^{-j}
\]
Finally,
\[
\begin{split}
\sum_{i\in I}(\diam V_i)^s
&\geq
\sum_{k=1}^N\diam (I_{jk}\cap X)^s=
\sum_{k=1}^N\sum_{\ell=1}^{2^{m-m_k}}\diam(I_m(x_{jk\ell})\cap X)^s\\
&=
\sum_{k=1}^N 2^{m-m_k}(\lambda^m)^s=\sum_{k=1}^N2^{-m_k}=2^{-j}.
\end{split}
\]
Therefore, taking the infimum over all coverings $\{ V_i\}_{i\in I}$ and letting $\delta\to 0$, yields the desired inequality $\H^s(I_j(x)\cap X)\geq 2^{-j}$. The proof is complete.
\end{proof}

Notice that Lemma~\ref{hausd1} implies that $\mu(X)=1$. Moreover, for any ball $B(x,r)=I_j(x)\cap X$ with $0<r\leq1$ and $j$ satisfying $\lambda^j<r\leq\lambda^{j-1}$, we have
\[
\frac12 r^s\leq\mu(B(x,r))=2^{-j}<r^s,
\]
because $\lambda^s=1/2$. Thus $(X,d,\mu)$ is Ahlfors $s$-regular.

We will use the nested sequence of balls containing $0$. For $j\geq0$, put
\[
B_j=I_j(0)\cap X=[0,3^{-j}]\cap X,
\qquad
A_j=B_j\setminus B_{j+1}.
\]
Thus $B_0=X$, and Lemma~\ref{T10} gives
\[
B_j=B(0,\lambda^{j-1})
\qquad\text{for }j\geq1.
\]
By Lemmata~\ref{T10} and~\ref{hausd1}, for every $j\geq0$,
\begin{equation}
\label{eq44}
\mu(B_j)=2^{-j},\qquad \diam B_j =\lambda^j=2^{-j/s},\qquad \mu(A_j)=2^{-j-1}.
\end{equation}
Since $B_j\downarrow\{0\}$, we also have the disjoint decomposition
\[
X=\{0\}\cup\bigcup_{j\geq0}A_j.
\]
Notice that $\mu(\{0\})=0$, because $\mu(\{0\})\leq\mu(B_j)=2^{-j}$ for every $j\geq0$.

Although $A_j$ is defined as the difference of two balls, it is itself a ball. This observation gives the following description of all balls in $X$.
\begin{lemma}
\label{T8}
For every $j\geq0$ and every $x\in A_j$,
\[
A_j=I_{j+1}(x)\cap X=B(x,\lambda^j)
\qquad\text{and}\qquad
d(x,0)=\lambda^j.
\]
Moreover, every ball containing $0$ is one of the balls $B_j$, and every ball that does not contain $0$ is contained in one of the balls $A_j$.
\end{lemma}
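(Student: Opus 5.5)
The plan is to argue directly from the combinatorial description of points and balls supplied by Lemma~\ref{T10}, working with the intervals $I_j(\cdot)$.

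\textbf{First claim: the description of $A_j$.} Fix $j\geq0$ and $x\in A_j$. Then $x\in B_j=I_j(0)\cap X$, so $I_j(x)=I_j(0)$. Since $x\notin B_{j+1}$, we have $I_{j+1}(x)\neq I_{j+1}(0)$. Hence $N(x,0)=j+1$ and $d(x,0)=\lambda^{j}$.

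Next I will show $A_j=I_{j+1}(x)\cap X$ by comparing both sets inside $I_j(0)\cap X$. The interval $I_j(0)$ contains exactly two components of $C_{j+1}$, namely $I_{j+1}(0)$ and one other; the other must be $I_{j+1}(x)$. Consequently
\[
B_j=(I_{j+1}(0)\cap X)\cup(I_{j+1}(x)\cap X)
\]
is a disjoint union, which gives $A_j=B_j\setminus B_{j+1}=I_{j+1}(x)\cap X$. Finally, since $\lambda^{j+1}<\lambda^j\leq\lambda^{j}$, formula \eqref{eq42} with index $j+1$ yields $B(x,\lambda^j)=I_{j+1}(x)\cap X$.

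\textbf{Second claim: classification of balls.} Let $B=B(y,r)$ be a ball. If $r>1$, then $B=X=B_0$, since $\diam X=1$ and all distances are at most $1$. Otherwise, \eqref{eq42} gives $B=I_m(y)\cap X$ for some $m\geq1$.

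\emph{Case $0\in B$.} Then $I_m(y)=I_m(0)$, so $B=B_m$.

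\emph{Case $0\notin B$.} Then $y\neq0$, so $y\in A_j$ for some $j\geq 0$, because $X\setminus\{0\}=\bigcup_j A_j$. By the first part, $I_{j+1}(y)\cap X=A_j$. If $m\leq j$, then $I_m(y)\supset I_{j+1}(y)$ and $I_m(y)=I_m(0)$ (as $N(y,0)=j+1>m$), so $0\in B$, a contradiction. Hence $m\geq j+1$, and by nestedness of components $I_m(y)\subset I_{j+1}(y)$, so $B\subset A_j$.

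\textbf{Expected difficulty.} The only delicate point is bookkeeping: matching the indices $N(x,0)=j+1$ with the radius convention in \eqref{eq42}, and using nestedness of the components $I_m$. I expect no real obstacle.
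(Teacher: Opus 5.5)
Your proof is correct and takes essentially the same route as the paper. Both arguments identify $A_j$ as the second component of $C_{j+1}$ inside $I_j(0)$, read off $N(x,0)=j+1$, and classify balls via \eqref{eq42}; the only cosmetic difference is the last step, where you show $m\ge j+1$ combinatorially, while the paper argues metrically that $r\le d(x,0)=\lambda^j$ gives $B(x,r)\subset B(x,\lambda^j)=A_j$.
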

\begin{proof}
The interval $I_j(0)=[0,3^{-j}]$ contains exactly two components of $C_{j+1}$, namely
\[
I_{j+1}(0)=[0,3^{-j-1}]
\qquad\text{and}\qquad
[2\cdot3^{-j-1},3^{-j}].
\]
It follows that
\[
A_j=[2\cdot3^{-j-1},3^{-j}]\cap X=I_{j+1}(x)\cap X
\qquad\text{for every }x\in A_j.
\]
Applying \eqref{eq42} with radius $\lambda^j$ gives $A_j=B(x,\lambda^j)$. Also, $x$ and $0$ belong to the same component of $C_j$ and to different components of $C_{j+1}$, so $N(x,0)=j+1$ and $d(x,0)=\lambda^j$.

Now let $B=B(x,r)$ be any ball in $X$. If $r>1=\diam X$, then $B=X=B_0$. If $0<r\leq1$, choose $m\geq1$ so that $\lambda^m<r\leq\lambda^{m-1}$. By Lemma~\ref{T10}, $B=I_m(x)\cap X$. Therefore, if $0\in B$, then $I_m(x)=I_m(0)$ and $B=B_m$.

Finally, suppose that $0\notin B(x,r)$. The center $x$ is nonzero, so it belongs to exactly one of the sets $A_j$. By the first part of the proof,
\[
d(x,0)=\lambda^j
\qquad\text{and}\qquad
A_j=B(x,\lambda^j).
\]
Since the open ball $B(x,r)$ does not contain $0$, we have $r\leq d(x,0)=\lambda^j$. Consequently,
\[
B(x,r)\subset B(x,\lambda^j)=A_j.
\]
This proves the lemma.
\end{proof}

\noindent
{\sc Construction of the pair $(u,g)$.}
For $k\geq1$ define
\[
 a_k=\frac{\log2}{\log(k+1)},
 \qquad
 U_k=\sum_{i=1}^k a_i,
 \qquad
 U_0=0.
\]
The sequence $(a_k)$ is positive and decreasing, $a_1=1$, and $a_k\to0$. For every $j\geq0$, define $u$ and $g$ on $A_j$ by
\[
 u(x)=U_j,\quad
 g(x)=\bigl(2^{j+1}(a_{j+1}^s-a_{j+2}^s)\bigr)^{1/s}
 \qquad x\in A_j,
\]
and set $u(0)=g(0)=0$. 

We first verify the required integrability. Since $a_k\leq1$, we have $U_j\leq j$. Using \eqref{eq44},
\[
 \Vert u\Vert_{s}^s
 =\sum_{j\geq1}\mu(A_j)U_j^s
 \leq\sum_{j\geq1}2^{-j-1}j^s<\infty.
\]
Also, since $a_k\to0$, the series below telescopes and gives
\begin{equation}
\label{eq45}
 \Vert g\Vert_{s}^s
 =\sum_{j\geq0}\mu(A_j)2^{j+1}(a_{j+1}^s-a_{j+2}^s)
 =\sum_{j\geq0}(a_{j+1}^s-a_{j+2}^s)=a_1^s=1.
\end{equation}
Thus, $u,g\in L^s(X)$ and $\Vert g\Vert_{s}=1$.

\noindent
{\sc The Poincar\'e inequality.}
We claim that for every ball $B\subset X$,
\begin{equation}
\label{eq46}
 \fint_B|u-u_B|\,d\mu
 \leq2\diam B\left(\fint_Bg^s\,d\mu\right)^{1/s}.
\end{equation}
If $0\notin B$, then Lemma~\ref{T8} gives $B\subset A_j$ for some $j\geq0$. The function $u$ is constant on $A_j$, so the left hand side of \eqref{eq46} is zero.

It remains to consider balls containing $0$, that is, $B=B_j$ for some $j\geq0$. If $x\in A_n$ with $n\geq j$, then $u(x)-U_j=U_n-U_j=\sum_{k=j+1}^n a_k$. Also, $x\in B_k$ precisely when $k\leq n$. Thus, for $x\in B_j\setminus\{0\}$,
\[
 u(x)-U_j=\sum_{k=j+1}^\infty a_k\chi_{B_k}(x).
\]
In particular, $u\geq U_j$ almost everywhere on $B_j$. Since the summands are nonnegative and $\mu(\{0\})=0$, Fubini's theorem and \eqref{eq44} yield
\begin{equation}
\label{eq47}
 \fint_{B_j}|u-U_j|\,d\mu
 =\sum_{k=j+1}^\infty a_k\frac{\mu(B_k)}{\mu(B_j)}
 =\sum_{k=j+1}^\infty a_k2^{j-k}
 \leq a_{j+1}.
\end{equation}
Here we used that $(a_k)$ is decreasing and $\sum_{k=j+1}^\infty2^{j-k}=1$.

On the other hand, $B_j\setminus\{0\}$ is the disjoint union of the sets $A_n$, $n\geq j$. Using \eqref{eq44} and the definition of $g$, we obtain
\[
 \int_{B_j}g^s\,d\mu
 =\sum_{n\geq j}\int_{A_n}g^s\,d\mu
 =\sum_{n\geq j}(a_{n+1}^s-a_{n+2}^s)=a_{j+1}^s.
\]
Therefore, again by \eqref{eq44},
\begin{equation}
\label{eq48}
 \diam B_j\left(\fint_{B_j}g^s\,d\mu\right)^{1/s}
 =2^{-j/s}\bigl(2^ja_{j+1}^s\bigr)^{1/s}=a_{j+1}.
\end{equation}
Finally, the triangle inequality gives
\[
\fint_{B_j}|u-u_{B_j}|\,d\mu
\leq\fint_{B_j}|u-U_j|\,d\mu+|u_{B_j}-U_j|
\leq2\fint_{B_j}|u-U_j|\,d\mu.
\]
Together with \eqref{eq47} and \eqref{eq48}, this proves \eqref{eq46}. Hence, $(u,g)$ satisfies a $(1,s)$-Poincar\'e inequality with parameters $\sigma=1$ and $C_{PI}=2$.

\noindent
{\sc Failure of exponential integrability.}
It remains to prove \eqref{eq40}. Fix $q>1$ and $c>0$. 
Let $j_o\geq 1$ be such that  $|U_j-u_X|\geq U_j2^{-1/q}$ for $j\geq j_o$. Existence of $j_o$ follows from the fact that $U_j\geq j\log2/\log(j+1)\to\infty$. Then
\[
\begin{split}
\int_X\exp\bigl(c|u-u_X|^q\bigr)\,d\mu
&=
\sum_{j\geq0}\mu(A_j)\exp\bigl(c|U_j-u_X|^q\bigr)\geq   
\sum_{j\geq j_o} 2^{-j-1}\exp\Big(\frac{c}{2}U_j^q\Big)\\
&\geq
\frac{1}{2}\sum_{j\geq j_o} \exp\Big(-j\log 2+\frac{c}{2}\Big(\frac{j\log2}{\log(j+1)}\Big)^q\Big)=\infty.
\end{split}
\]
The proof is complete.
\end{proof}

\end{document}